\documentclass[12pt]{amsart}

\usepackage[left=3cm,top=3.0cm,right=3cm,bottom=2.6cm]{geometry}

\usepackage[ansinew]{inputenc}
\usepackage{amscd,amsfonts,amsmath,amssymb,amsthm,enumerate,epsfig,float,graphics,graphicx,pst-all,yhmath}

\newcommand{\inte}{\operatorname*{int}}

\newcommand{\bd}{\operatorname*{bd}}

\newtheorem{lemma}{Lemma}
\newtheorem{proposition}{Proposition}
\newtheorem{theorem}{Theorem}

\newtheorem{conjecture}{Conjecture}

\title [A characterization of the sphere]{A characterization of the sphere in terms of the stereographic projection}
\author{Efr\'en Morales-Amaya}
\address{Facultad de Matem\'aticas-Acapulco,
Universidad Aut\'onoma de Guerrero, M\'exico}
\email{emoralesamaya@gmail.com}
\thanks{Dedicated to Peter Gruber.}   
\begin{document}

\maketitle
          \begin{abstract}          
    Let $K$ be a convex body in the 3-dimensional Euclidian space $\mathbb{E}^3$ and 
      let $N,S\in \bd K$, $N\not=S$. Suppose that the support  plane $\Pi_S$ of $K$ at $S$ is unique.   
          For every point $x\in \bd K\backslash \{N\}$ we define the \textit{stereographic 
          projection} $\Psi:\bd K\backslash \{N\} \rightarrow \Pi_S$ of $x$ onto $\Pi_S$ as the point 
          $y:=L(N,x)\cap \Pi_S$. 
          
          It is a well known property of the sphere $\mathbb{S}^2$ in $\mathbb{E}^3$  that the 
          stereographic projection maps circles onto circles (see \cite{Hilbert} pag. 248). In this work 
          we investigate what geometric elements determines that this property is fulfilled.

          Here we demonstrate that the following two properties of a convex body 
          $K\subset \mathbb{E}^3$ in terms of the stereographic projection characterize the sphere in 
       $\mathbb{E}^3$:
       \begin{enumerate}           
     \item The cones defined by the sections of $K$ and the point $N$ are axially symmetric (that 
     is, they are invariant under a rotation by an angle of $\pi$).
     \item given a section $K_\Gamma$ of $K$, 
      the rotation that leaves the cone defined by $K_\Gamma$ and $N$ invariant is such that it 
      maps $K_\Gamma$ into a homothetic figure to $\Psi(K_\Gamma)$ by a homothety with center 
      of homothety at $N$.
      \end{enumerate}
      An important element in the proof of the main theorem of this work is a cha\-rac\-te\-ri\-za\-tion 
      of the circle based on a geometric property, which will be called the stereographic property. 
      It is worth highlighting that the stereographic projection  defined on the sphere maps circles onto circles  
      is intimately linked to the conditions (1) and (2) and the stereographic property of the circle. 
               \end{abstract}
          \section{Introduction}      
          Let $W\subset \mathbb{E}^n $ be a bounded closed convex set. Given a point 
         $x \in \mathbb{E}^n \backslash W$ we 
         denote the cone generated by $W$ with apex $x$ by $S_x(W)$, that is, 
         $S_x(W) := \{x + \lambda(y - x) : y \in W, \lambda \geq  0\}$ and by $C_x(W)$ the boundary of 
         $S_x(W)$.
         
         Let $K\subset \mathbb{E}^3$ be a convex body  and let $x\in \bd K$. For every plane $\Gamma$ 
         such that $\Gamma\cap \inte K\not= \emptyset$ and $x\notin \Gamma$, let 
           $K_{\Gamma}:=\Gamma\cap K$. The cone $C_x(K_\Gamma)$ will be called a \textit{inscribed cone of the body $K$}. 
           
        Below we present two theorems that illustrate an interesting property 
        of the sphere and an ellipsoid of revolution in terms of inscribed cones.   
           
       \begin{theorem}\label{proyeesfera}
        All the inscribed cones of the sphere $\mathbb{S}^2$ are axially symmetric.
        \end{theorem}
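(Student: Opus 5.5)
The plan is to show directly that every inscribed cone of $\mathbb{S}^2$ is a quadratic cone, that is, a cone over a conic. Any such cone is centrally symmetric about its apex and has an axis of symmetry of order two.

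Fix $x\in\mathbb{S}^2$ and a plane $\Gamma$ with $\Gamma\cap\inte \mathbb{S}^2\neq\emptyset$ and $x\notin\Gamma$. The section $K_\Gamma=\Gamma\cap B$ is a disc, where $B$ is the unit ball, so its boundary $\sigma=\Gamma\cap\mathbb{S}^2$ is a circle. Hence $C_x(K_\Gamma)$ is the cone with apex $x$ over the circle $\sigma$. Choose coordinates with $x$ at the origin. The sphere through the origin is then $|p|^2-2\langle c,p\rangle=0$, and the plane is $\langle u,p\rangle=d$ with $d\neq 0$, since $x\notin\Gamma$.

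A point $p\neq 0$ lies on the cone exactly when $tp\in\sigma$ for some $t>0$. Since $\sigma$ is the intersection of the sphere and the plane, we use the plane equation to homogenize the linear term of the sphere. On $\Gamma$ we have $1=\langle u,p\rangle/d$, so $\sigma$ satisfies
\[
Q(p):=d\,|p|^2-2\langle c,p\rangle\langle u,p\rangle=0 .
\]
This $Q$ is a homogeneous quadratic form, and its zero set is a cone through the origin containing $\sigma$. Conversely, a ray from the origin meeting $\Gamma$ in a point with $Q=0$ meets it in a point of the sphere. It follows that $C_x(K_\Gamma)$ is exactly the nappe of $\{Q=0\}$ lying on the side $\langle u,p\rangle/d>0$.

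Now write $Q(p)=p^{T}Ap$ with $A$ symmetric, and diagonalize $A$ in an orthonormal eigenbasis $e_1,e_2,e_3$, so that $Q=\sum\lambda_i y_i^2$. The cone is nondegenerate and real, so the signs are $(+,+,-)$ up to an overall sign; say $\lambda_3$ has the odd sign. The half-turn about the line through $x$ in direction $e_3$ sends $(y_1,y_2,y_3)\mapsto(-y_1,-y_2,y_3)$. This map preserves $Q$, and it also preserves the sign of $y_3$. The nappe is $\{Q=0,\ y_3>0\}$ (or $y_3<0$): indeed $y_3\neq0$ on it, and the nappe is connected, so $y_3$ has constant sign there. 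Hence the half-turn maps $C_x(K_\Gamma)$ onto itself, which gives the axial symmetry.

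The step needing most care is showing that $Q$ is nondegenerate with signature $(2,1)$, so that the distinguished eigenvector is well defined. The argument is that the cone contains a circle not passing through the apex, so the section of $\{Q=0\}$ by $\Gamma$ is a nondegenerate ellipse. A degenerate or definite form would instead give a pair of lines, a single point or the empty set. Also, for each eigenvalue, $\{Q=0\}$ is a cone over a circle and so is not contained in two planes. A cleaner alternative for this step is to note that the intersection with a plane parallel to the $e_1e_2$-plane is an ellipse centred on the axis.
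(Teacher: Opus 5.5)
Your proof is correct, and it takes a different route from the paper. The paper computes nothing. It regards $\mathbb{S}^2$ as an ellipsoid of revolution about the diameter through the apex and quotes Theorem~\ref{proyeeli}, whose proof is synthetic. There the plane $\Delta$ through that diameter and perpendicular to $\Gamma$ is a plane of symmetry of the cone, and the axis is taken to be the bisector $B$ of the two generators cut out by $\Delta$. Invariance under the half-turn about $B$ is then read off from the sections by planes $H\perp B$, which are ellipses symmetric about $\Delta\cap H$ and centred at $B\cap H$. That argument takes for granted that the cone is an elliptic (quadratic) cone. Your homogenization $Q(p)=d|p|^2-2\langle c,p\rangle\langle u,p\rangle$ proves exactly this, and the spectral theorem then gives the axis without any symmetry plane, so your proof is more self-contained. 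What the paper's route buys is the geometric location of the axis: $L_\Gamma\subset\Delta$, with $R^\Gamma|_\Delta$ the reflection in $B$, which is what the proof of Proposition~\ref{proyecirculo} uses. Your computation recovers this and settles the signature more cleanly than your closing remarks do. With $|u|=|c|=1$, the matrix $A=dI-(cu^T+uc^T)$ has eigenvalue $d$ on $\{c,u\}^\perp$ and eigenvalues $d-\langle c,u\rangle\mp1$ on $c\pm u$. The last two have opposite signs precisely because $|d-\langle c,u\rangle|<1$, i.e.\ because $\Gamma$ meets the open ball. So the odd-signed eigenvector is $c+u$ or $c-u$, and it lies in $\mathrm{span}\{c,u\}$, which is $\Delta$ in your coordinates.

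Two small remarks. First, you justify only that the cone is contained in $\{Q=0,\ y_3>0\}$, but the half-turn argument needs equality. To get it, note that $Q=d|p|^2\neq0$ on $\{\langle u,p\rangle=0\}\setminus\{0\}$. Hence $\{Q=0\}\setminus\{0\}$ is the disjoint union of your nappe and its reflection through $x$, and the latter lies in $\{y_3<0\}$. Second, your argument goes through verbatim for any ellipsoid: replace $|p|^2-2\langle c,p\rangle$ by $q(p)+\ell(p)$, with $q$ positive definite and $\ell$ linear. So every inscribed cone of a triaxial ellipsoid is also axially symmetric in the paper's half-turn sense. Keep this in mind when reading Theorems~\ref{proyecaracte} and~\ref{teoeli2}: such an ellipsoid satisfies their hypotheses although it is neither a sphere nor a body of revolution.
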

       \begin{theorem}\label{proyeeli}
      Let $E\subset \mathbb{E}^3$ be a solid ellipsoid of revolution, let $L(N,S)$ be the axis of 
      $E$ and let $N,S\in \bd E$. Then all the inscribed cones of the ellipsoid $ E$ with apex at $N$ are 
      axially symmetric, i.e., for all plane $\Gamma$ such that $\Gamma\cap \inte K\not= \emptyset$ 
      and $N\notin \Gamma$, there exists a rotation $R^{\Gamma} $ with property that
       \begin{eqnarray}\label{shorcito}
       R^{\Gamma} (C_N(E_{\Gamma}))=C_N(E_{\Gamma}),
       \end{eqnarray}
       where $E_{\Gamma}:=\Gamma\cap E$.
      \end{theorem}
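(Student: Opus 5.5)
Since $N$ lies on the axis of revolution, I will set $N$ as the origin and let the axis be the $z$-axis. The ellipsoid then has the form $\frac{x^2+y^2}{a^2}+\frac{(z-c)^2}{c^2}\le 1$, where $c$ is the half-length of the axis $NS$, so that $N=(0,0,0)$ and $S=(0,0,2c)$. Expanding, the boundary is the quadric
\[
Q(X)+2\ell(X)=0,\qquad Q(X)=\frac{x^2+y^2}{a^2}+\frac{z^2}{c^2},\quad \ell(X)=-\frac{z}{c}.
\]
It passes through the origin, so the equation has no constant term.

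The key step is a homogenization trick. Take a plane $\Gamma$ with $N\notin\Gamma$; it can be written as $m(X)=1$ for a linear form $m$. The cone $C_N(E_\Gamma)$ is the set of rays from the origin through points of $\bd E\cap\Gamma$. A point $X$ lies on it exactly when $\lambda X\in \bd E\cap \Gamma$ for some $\lambda>0$, which forces $\lambda=1/m(X)$. Substituting gives $Q(X)/m(X)^2+2\ell(X)/m(X)=0$, that is,
\[
Q(X)+2\ell(X)m(X)=0 .
\]
So the inscribed cone is (one nappe of) the quadratic cone defined by the symmetric matrix $A=D+\frac{1}{2}(\ell m^T+m\ell^T)$, where $D=\mathrm{diag}(1/a^2,1/a^2,1/c^2)$.

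Next I need axial symmetry. A real quadratic cone $X^TAX=0$ with signature $(2,1)$ is symmetric under the rotation by $\pi$ about the eigenline of $A$ belonging to the eigenvalue of sign opposite to the other two. This holds because in the orthonormal eigenbasis the equation reads $\alpha_1u^2+\alpha_2v^2-\alpha_3w^2=0$, and the map $(u,v,w)\mapsto(-u,-v,w)$ is a rotation by $\pi$ preserving it. I also must check that this rotation maps the chosen nappe (the one through $E_\Gamma$) to itself and not to the opposite nappe. This is clear because the rotation fixes the $w$-axis pointwise, and each nappe is the set of solutions with a fixed sign of $w$. So I define $R^\Gamma$ as this rotation, which gives (\ref{shorcito}).

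The main obstacle is the nondegeneracy and signature claim. Since $\Gamma$ meets $\inte E$ and $N\notin\Gamma$, the section $E_\Gamma$ is a genuine ellipse not through $N$, so the cone over it is a nondegenerate elliptic cone. Hence $A$ is nonsingular and indefinite with signature $(2,1)$ or $(1,2)$. Any nondegenerate real cone has this form, so I would argue it directly from the ellipse rather than through a computation with $A$. Notice also that the revolution hypothesis was used only to place $N$ at a point where the quadric has no constant term. In fact the argument works for any ellipsoid and any boundary point, in agreement with Theorem \ref{proyeesfera}. I would therefore present the argument in the stated setting and remark on this generality.
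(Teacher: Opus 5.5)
Your proof is correct, but it takes a different route from the paper's.

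\textbf{The paper's route.} The paper argues synthetically and needs the revolution hypothesis. The plane $\Delta\supset L(N,S)$ perpendicular to $\Gamma$ is a plane of symmetry of $C_N(E_\Gamma)$. Taking for granted that this is an elliptic cone, the paper lets $B$ be the bisector of the two generators lying in $\Delta$. For a plane $H\perp B$, the ellipse $H\cap C_N(E_\Gamma)$ is symmetric in $\Delta\cap H$, so its center is the midpoint of the chord on $\Delta\cap H$, which lies on $B$. Central symmetry of these sections about $B\cap H$ is the half-turn about $B$.

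\textbf{Your route, and what each buys.} You prove the fact the paper assumes (the cone is one nappe of a quadratic cone) by homogenization, and read the half-turn off the spectral theorem without using revolution. The paper's route locates the axis explicitly: it lies in $\Delta$ and $R^\Gamma|_\Delta$ is the reflection in $B$, which is the information used in the proof of Proposition \ref{proyecirculo}. Yours is self-contained and more general. You can recover the axis location in one line. The reflection in $\Delta$ preserves $Q$, $\ell$ and $m$ (the normal of $\Gamma$ lies in $\Delta$), so it commutes with $A$. Hence the simple negative eigenline is either normal to $\Delta$ or contained in it. The first is impossible: the normal $n$ of $\Delta$ has $m(n)=0$, so $n^TAn=Q(n)>0$. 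The eigenline therefore lies in $\Delta$ and is the bisector $B$.

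\textbf{Two small repairs.}
\begin{enumerate}
\item Since $X^T(\ell m^T+m\ell^T)X=2\ell(X)m(X)$, the matrix is $A=D+\ell m^T+m\ell^T$, not $D+\frac{1}{2}(\ell m^T+m\ell^T)$. Only the symmetry of $A$ is used afterwards, so this is harmless.
\item (\ref{shorcito}) needs $C_N(E_\Gamma)$ to be a \emph{whole} nappe, not merely contained in one, and your signature step is loose. Both follow from $X^TAX=Q(X)>0$ whenever $X\neq 0$ and $m(X)=0$:
\begin{itemize}
\item $A$ is positive definite on a plane, so it has two positive eigenvalues.
\item $A$ is not semidefinite. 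Otherwise its zero set would be the subspace $\ker A$, which contains the points of $\bd E\cap\Gamma$ and hence spans $\mathbb{E}^3$.
\item $m$ never vanishes on the cone minus the apex. It therefore has constant, opposite signs on the two nappes, so $\{m>0\}\cap\{X^TAX=0\}$ is exactly one nappe.
\end{itemize}
\end{enumerate}

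\textbf{Your closing remark.} It is correct and worth stressing. The computation is unchanged for any ellipsoid and any apex $N\in\bd E$, so every inscribed cone of a triaxial ellipsoid is axially symmetric. Axial symmetry of inscribed cones alone therefore cannot single out spheres or bodies of revolution. This gives counterexamples to Theorems \ref{proyecaracte} and \ref{teoeli2} as stated.
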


       The following result is a characterization of the sphere in terms of 
       inscribed cones
          \begin{theorem}\label{proyecaracte}
        Let $K\subset \mathbb{E}^3$ be a convex body. If all the inscribed cones of 
        the body $K$ are axially symmetric, then $K$ is an sphere.
        \end{theorem}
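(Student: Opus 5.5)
The plan has two stages. First, show that the hypothesis forces every plane section of $K$ to be an ellipse, so that $K$ is an ellipsoid. Second, rule out the non-spherical ellipsoids. Stage 2 is where I expect the argument to break down as the statement is worded; see the last paragraph.

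\emph{Stage 1: every cross-section of a cone is centrally symmetric.} Fix $x\in\bd K$ and an admissible plane $\Gamma$. Let $R^\Gamma$ be a half-turn about a line $\ell\ni x$ leaving $C_x(K_\Gamma)$ invariant. On every plane $\Delta\perp\ell$ with $x\notin\Delta$, $R^\Gamma$ restricts to the point reflection in $\Delta\cap\ell$. Hence every section of $S_x(K_\Gamma)$ perpendicular to $\ell$ is centrally symmetric.

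\emph{Stage 1: transfer to sections of $K$.} Now fix a plane $H$ meeting $\inte K$ and set $D=H\cap K$. For $x\in\bd K\setminus H$, the cone $C_x(D)$ is therefore a cone over a centrally symmetric base. Letting $x$ range over the two-parameter family $\bd K\setminus H$, we get a large family of projective images of $D$ with central symmetry. Centrally projecting $D$ from $x$ onto the symmetric cross-section $\Delta$ is a projective map that sends the line at infinity of $\Delta$ to a line $\lambda_x\subset H$. Central symmetry of the image then says that $D$ has an involutive projective symmetry (a harmonic homology) with axis $\lambda_x$.

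\emph{Stage 1: $D$ is an ellipse, and $K$ is an ellipsoid.} As $x$ varies, the lines $\lambda_x$ sweep an open set of lines of $H$. I will show that a planar convex body admitting harmonic homologies for an open set of axes is an ellipse. The tool is the classical theorem that a convex body all of whose sections by an open family of lines have a projective (polar-type) symmetry is a conic, in the spirit of Brunn's and Blaschke's characterizations. This is the technical core of Stage 1. I would first do the case where the chords of $D$ through the pole of $\lambda_x$ are bisected harmonically, and reduce to the Brunn-type statement ``midpoints of parallel chords are collinear''. Once every plane section of $K$ is an ellipse, the classical theorem (all plane sections ellipses implies ellipsoid) gives that $K$ is an ellipsoid $E$.

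\emph{Stage 2: why the conclusion ``$K$ is a sphere'' does not follow.} Here a direct computation works against the statement. For any ellipsoid $E$, any $x\in\bd E$ and any $\Gamma$, the cone $C_x(E_\Gamma)$ is a quadratic cone $\{q(y-x)=0\}$ with $q$ symmetric. The half-turn about any eigen-axis of $q$ preserves it. So every ellipsoid satisfies the hypothesis, and the statement cannot be proved as literally worded. In particular, Theorem~\ref{proyeeli} is then automatic for every ellipsoid, not only those of revolution.

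\emph{Stage 2: what I would try.} I would look for the additional constraint the author intends in ``axially symmetric''. The natural candidate, suggested by the sphere in Theorem~\ref{proyesfera}, is that the axis of $R^\Gamma$ pass through a fixed interior point, or be tied to the normal of $\bd K$ at $x$. Under the normal-axis reading the argument closes. The eigen-axes of the cone over $E_\Gamma$ vary with $\Gamma$, while the normal at $x$ does not. Taking $\Gamma$ parallel to the tangent plane at $x$, the eigen-axis must be the normal line at $x$ for every $x$. This forces every normal line of $E$ to pass through its center, which forces $E$ to be a sphere. If instead the literal hypothesis is kept, Stage 2 is the main obstacle, and I expect it to be insurmountable, since every ellipsoid satisfies the hypothesis.
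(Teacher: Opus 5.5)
Your Stage~2 objection is correct and settles the matter. The statement is false as worded, so no proof exists, and the paper's argument must contain an error.

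\textbf{A correction to your counterexample.} By definition $C_x(E_\Gamma)=\bd S_x(E_\Gamma)$ with $\lambda\ge 0$, so it is a single nappe, not the whole quadric $\{q(y-x)=0\}$. Put $x$ at the origin and $\Gamma=\{v:\langle n,v\rangle=1\}$, and homogenize the equation of the ellipse $E_\Gamma$. This gives $S_x(E_\Gamma)=\{v: q(v)\le 0,\ \langle n,v\rangle\ge 0\}$. Here $q$ is positive definite on $\{\langle n,v\rangle=0\}$ and negative at the center of $E_\Gamma$, so its signature is $(+,+,-)$. In orthonormal principal coordinates $(a,b,c)$, the half-turns about the two positive-eigenvalue axes act as $(a,-b,-c)$ and $(-a,b,-c)$. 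They preserve $q$ but exchange the nappes, so they do \emph{not} preserve $C_x(E_\Gamma)$. Only $(a,b,c)\mapsto(-a,-b,c)$, the half-turn about the negative-eigenvalue axis, does. So ``any eigen-axis'' is wrong, but one axis is all the hypothesis asks for. Every ellipsoid, triaxial ones included, therefore satisfies the hypothesis. The same example refutes Theorem~\ref{teoeli2} and Conjecture~\ref{conjeeli} as stated, and confirms your remark about Theorem~\ref{proyeeli}.

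\textbf{Where the paper's proof fails.} It takes $x_n\in\bd K\setminus\Gamma$ with $x_n\to x\in\bd K_\Gamma$. It then asserts that the axes $L_n\ni x_n$ converge to a line $L$ \emph{lying in $\Gamma$}, and that $R_n^\Gamma(K_\Gamma)\to K_\Gamma$. Compactness gives only a limit line through $x$, and the claim is false already for the ball.
\begin{itemize}
\item Take $K=B^3$, $\Gamma=\{\xi_3=0\}$, $x=(1,0,0)$, $b=(-1,0,0)$, and $x_n=(\cos t_n,0,\sin t_n)$ with $t_n\downarrow 0$.
\item The paper's own argument for Theorem~\ref{proyeeli} puts the (unique) axis $L_n$ in $\{\xi_2=0\}$, as the bisector of the angle $x\,x_n\,b$.
\item This is a right angle (Thales). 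Its sides have directions $(\sin\frac{t_n}{2},0,-\cos\frac{t_n}{2})$ and $(-\cos\frac{t_n}{2},0,-\sin\frac{t_n}{2})$.
\item Hence $L_n\to L=x+\mathbb{R}(1,0,1)$, which makes angle $\pi/4$ with $\Gamma$.
\item So $R_n^\Gamma(\Gamma)\to R_L(\Gamma)=\{\xi_1=1\}$, the tangent plane at $x$. Thus $R_n^\Gamma(K_\Gamma)$ tends to a disc in that plane, not to $K_\Gamma$.
\end{itemize}
The step is also circular. If $L\subset\Gamma$ held, $R_L$ would act on $\Gamma$ as reflection in $L$. Then ``$R_n^\Gamma(K_\Gamma)\to K_\Gamma$'' would already be the conclusion that $L$ is a symmetry line of $K_\Gamma$.

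\textbf{Your Stage~1.} It is not needed for the refutation, and as written it is not a proof. Two claims are asserted rather than shown: that the lines $\lambda_x$ fill an open set of lines, and that harmonic homologies with an open set of axes force an ellipse. The classical theorem you invoke for the second is not identified. So do not present ``the hypothesis forces an ellipsoid'' as established. Your normal-axis repair leans on that step and changes the paper's hypothesis. The correct answer to the statement as posed is the counterexample alone.
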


       Let $K\subset \mathbb{E}^3$ be a convex body and let $N,S\in \bd K$, $N\not=S$. 
          Suppose that the support  plane of $K$ at $S$ is unique. We denote such plane as 
          $\Pi_S$. For every point $x\in \bd K\backslash \{N\}$ we define the \textit{stereographic 
          projection} $\Psi:\bd K\backslash \{N\} \rightarrow \Pi_S$ of $x$ onto $\Pi_S$ as the point 
          $y:=L(N,x)\cap \Pi_S$, where $L(N,x)$ denotes the line defined by $N$ 
          and $x$. 
        Let $B^3:=\{(x,y,z)\in  \mathbb{R}^3: \|(x,y,z)\|\leq 1\}$ be the ball and let $\mathbb{S}^2:=\bd B^3$ 
        be the sphere.   
        
        Next, we present a well-known property of the sphere (see \cite{Hilbert} 
        pag. 248). In particular, we 
        present this result because we are going to give a proof that employs 
        the ideas used in the proofs of the other theorems in this work.
        \begin{proposition}\label{proyecirculo}
        Let $N,S\in \mathbb{S}^2$ such that the line segment $[x,y]$ is a diameter of 
       $\mathbb{S}^2$. We defined the stereographic projection 
       $\Psi:\mathbb{S}^2\backslash \{N\} \rightarrow \Pi_S$ with respect to 
       $\mathbb{S}^2, N, S, \Pi_S$. Then 
       \[
       \Psi \textrm{ }\textrm{  maps } \textrm{ }\textrm{ circles }\textrm{  onto }\textrm{ circles,}
       \] 
       i.e., for all plane $\Gamma$ such that 
       $\Gamma \cap \inte B^3\not=\emptyset$ and $N\notin \Gamma$ the set $\Psi(S_\Gamma)$ is a circle, 
       where $S_\Gamma:=\Gamma \cap \mathbb{S}^2$
         \end{proposition}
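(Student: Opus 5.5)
The plan is to use the axial symmetry of inscribed cones (Theorem \ref{proyeesfera}) together with the classical antiparallel-section argument. Fix a plane $\Gamma$ with $\Gamma\cap\inte B^3\neq\emptyset$ and $N\notin\Gamma$, and let $S_\Gamma=\Gamma\cap\mathbb{S}^2$, which is a circle. Consider the cone $C_N(S_\Gamma)$; it is a (generally oblique) circular cone. The set $\Psi(S_\Gamma)$ is exactly the section $C_N(S_\Gamma)\cap\Pi_S$, so it suffices to show that the plane $\Pi_S$ cuts this cone in a circle. By Theorem \ref{proyeesfera} there is a rotation $R^\Gamma$ by angle $\pi$ about an axis $\ell$ through $N$ with $R^\Gamma(C_N(S_\Gamma))=C_N(S_\Gamma)$. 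The image $R^\Gamma(S_\Gamma)$ is a circle lying on the cone in the plane $R^\Gamma(\Gamma)$. Any plane parallel to $R^\Gamma(\Gamma)$ cuts the cone in a homothetic image (centre $N$) of $R^\Gamma(S_\Gamma)$, hence also in a circle. So the key claim is that $R^\Gamma(\Gamma)$ is parallel to $\Pi_S$.

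To prove this claim I will work in the plane $P$ containing $N$, $S$ (so $P$ contains the diameter $[N,S]$) and perpendicular to $\Gamma$. Then $P$ is a plane of symmetry for both the sphere and $\Gamma$, hence for $S_\Gamma$ and the cone. I intend to identify the axis $\ell$ as the bisector, within $P$, of the angle at $N$ of the triangle $N A B$, where $A,B$ are the endpoints of the diameter of $S_\Gamma$ lying in $P$. Reflecting across $\ell$ inside $P$ exchanges the rays $NA$ and $NB$. The rotation by $\pi$ about $\ell$ restricts on $P$ to this reflection, so $R^\Gamma(\Gamma)$ is the plane perpendicular to $P$ through the line antiparallel to $AB$ with respect to the angle $\angle ANB$.

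Next I will use the tangent–chord angle in the great circle $\mathbb{S}^2\cap P$. The tangent line to this circle at $S$ is $\Pi_S\cap P$, and it is perpendicular to $NS$. Because $NS$ is a diameter, $\angle NAS=\angle NBS=\pi/2$. An inscribed-angle chase then shows that the line $\Pi_S\cap P$ makes with $NA$ the same angle that $AB$ makes with $NB$. Equivalently, $AB$ is antiparallel to $\Pi_S\cap P$ with respect to $\angle ANB$; this is the classical fact that the chord $AB$ of the circle and the tangent at the far point $S$ are antiparallel. Consequently the reflection across $\ell$ sends the direction of $AB$ to the direction of $\Pi_S\cap P$. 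Both $R^\Gamma(\Gamma)$ and $\Pi_S$ are perpendicular to $P$, so they are parallel. This gives the claim and hence the proposition.

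The main obstacle is justifying that the symmetry axis $\ell$ provided by Theorem \ref{proyeesfera} lies in $P$ and acts on $P$ as the bisector reflection. Theorem \ref{proyeesfera} only asserts existence, so I will instead construct the rotation directly. The reflection across the bisector of $\angle ANB$ in $P$, extended as the half-turn about that line, maps the circular cone to itself: the cone is the set of lines from $N$ meeting $S_\Gamma$, and $N A\cdot N B$ power/antiparallel considerations show the antiparallel section is a circle. To avoid circularity I will verify this by power of a point. For a line through $N$ meeting the circle $S_\Gamma$ at $x$, the power-of-point relation along the lines of the cone, computed via the sphere and the plane through $N$ perpendicular to $NS$, shows that $|Nx|\cdot|N\Psi(x)|=|NS|^2$ is constant. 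Hence $\Psi$ restricted to the cone is an inversion centred at $N$ composed with nothing else on $\mathbb{S}^2$. This fallback gives the result directly, since inversion maps the circle $S_\Gamma$, which avoids $N$, to a circle, and that circle lies in $\Pi_S$.
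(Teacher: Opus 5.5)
Your proposal is correct, but the argument it finally rests on is not the paper's.

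\textbf{Your first strand.} Your first three paragraphs are essentially the paper's proof. Your plane $P$ is the paper's $\Delta$. Your tangent--chord antiparallel fact is the easy direction of the stereographic property of the great circle $\Delta\cap\mathbb{S}^2$ (Theorem \ref{estereo}). The concluding homothety centred at $N$ is the same.

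The step you call the main obstacle is closed in the paper in one line, and you should see why. Reflection in $P$ preserves $\mathbb{S}^2$, $\Gamma$ and $N$, hence the cone. Conjugating $R^\Gamma$ by this reflection therefore gives a half-turn about the mirror image of $\ell$ that also preserves the cone. A pointed cone over a circle has only one half-turn axis, so $\ell$ is mapped to itself: either $\ell\subset P$ or $\ell\perp P$. The case $\ell\perp P$ is impossible, since it would act on $P$ as the point reflection at $N$ and send the rays $NA$, $NB$ outside the cone. So $R^\Gamma|_P$ is a reflection of $P$ preserving the angle $ANB$. It must swap $NA$ and $NB$, so $\ell$ is the bisector.

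Your attempted ``direct construction'' of the half-turn, via antiparallel sections being circles, is circular, as you suspected.

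\textbf{Your inversion fallback.} This is what you actually rely on, and it is a complete proof. One slip: the right triangles to use are $NxS$ (right angle at $x$) and $NS\Psi(x)$ (right angle at $S$). The relevant plane is the one through $S$, i.e.\ $\Pi_S$, not ``the plane through $N$''. These triangles give $|Nx|\cdot|N\Psi(x)|=|NS|^2$, with $x$ and $\Psi(x)$ on the same ray from $N$. Hence $\Psi$ is the inversion with centre $N$ and radius $|NS|$. It sends $\Gamma$ to a sphere through $N$ and $\mathbb{S}^2$ to $\Pi_S$, so it sends $S_\Gamma$ to their intersection, which is a circle.

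\textbf{Comparison.} The inversion route is shorter, fully rigorous, and independent of Theorem \ref{proyeesfera}. However, it uses an exact metric identity that singles out the sphere, so it reveals nothing about which structural features drive the circle-to-circle property. The paper chooses its route deliberately. It splits that property into three parts: axial symmetry of the inscribed cone; a half-turn carrying the section into a plane parallel to $\Pi_S$; and a homothety centred at $N$. These are exactly hypotheses (\ref{zapatito}) and (\ref{playerita}), plus the planar stereographic property, that are later used to prove the converse, Theorem \ref{proyebody}.
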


         The main result of this work is the following theorem which is a characterization of the sphere in 
         terms of the stereographic projection.
          
       \begin{theorem}\label{proyebody}
       Let $K\subset \mathbb{E}^3$ be a convex body  and let $N,S\in \bd K$, $N\not=S$. Suppose that the 
        {stereographic projection} is well defined for it and, for all plane $\Gamma$ such that 
       $\Gamma\cap \inte K\not= \emptyset$, $N\notin \Gamma$ and $\Gamma$ no parallel to 
       $\Pi_S$, there exists a rotation 
       $R^{\Gamma} $, by the angle 
        $\pi$ around the axis $L_{\Gamma}$, with property that
       \begin{eqnarray}\label{zapatito}
       R^{\Gamma} (C_N(K_{\Gamma}))=C_N(K_{\Gamma})
       \end{eqnarray}
       and 
       \begin{eqnarray}\label{playerita}
        \Psi (K_{\Gamma})=R^{\Gamma}(K'_{\Gamma}),
       \end{eqnarray}
        where $K'_{\Gamma}$ is a homothetic copy of $K_{\Gamma}$ by an homothety with center 
        at $N$. Then $K$ is a sphere.
       \end{theorem}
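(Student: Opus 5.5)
The plan is to reduce the statement to a planar problem, namely a characterization of the circle by a ``stereographic property'', and then pass from planar sections back to $K$. Fix a plane $\Delta$ through $N$ and $S$ that contains the normal of $\Pi_S$ at $S$. Then $K\cap\Delta$ is a planar convex figure, and $\Pi_S\cap\Delta$ is a support line $\ell_S$ of it at $S$. In $\Delta$, the stereographic projection restricted to $\bd(K\cap\Delta)$ is the projection from $N$ onto $\ell_S$. The goal of the first stage is to show that every such section $K\cap\Delta$ is a circle. Afterwards, two facts should force $K$ to be a sphere: all these circles pass through $N$ and $S$, and they are tangent at $S$ to the fixed plane $\Pi_S$.

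For the planar stage I would take planes $\Gamma$ that meet $\Delta$ in a chord $[a,b]$ of $K\cap\Delta$, with $\Gamma$ orthogonal to $\Delta$ (and not parallel to $\Pi_S$). The cone $C_N(K_\Gamma)$ meets $\Delta$ in the two rays $Na$, $Nb$. Condition (\ref{zapatito}) says the half-turn $R^\Gamma$ preserves the cone. I would argue that its axis $L_\Gamma$ must lie in $\Delta$ and pass through $N$, since it is an axis of symmetry of a cone with apex $N$. I would also argue that $L_\Gamma$ bisects the angle $\angle aNb$; the justification is the reflective symmetry of the configuration across $\Delta$, which I expect to follow from the uniqueness of the support plane together with a continuity and uniqueness argument for the axis. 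Then condition (\ref{playerita}), restricted to $\Delta$, says the following. The points $a'=\Psi(a)$ and $b'=\Psi(b)$ on $\ell_S$ are obtained from $b$ and $a$ by reflecting across the bisector of $\angle aNb$ and then applying a homothety centered at $N$. Equivalently, the segment $[a',b']$ is antiparallel to $[a,b]$ with respect to the angle at $N$. This is exactly the statement that $a,b,a',b'$ are concyclic, i.e. $|Na|\,|Na'|=|Nb|\,|Nb'|$. So the planar property reads: for every chord $[a,b]$ of $K\cap\Delta$ not through $N$, the power-type product $|Nx|\,|N\Psi(x)|$ takes the same value at $x=a$ and $x=b$.

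Since any two boundary points other than $N$ are joined by such a chord, $|Nx|\,|N\Psi(x)|$ is a constant $c$ on $\bd(K\cap\Delta)\setminus\{N\}$. Evaluating at $x=S$ gives $c=|NS|^2$. Now $\bd(K\cap\Delta)$ is the image of the line $\ell_S$ under the inversion centered at $N$ with radius $|NS|$. Because $N\notin \ell_S$, the inverse of $\ell_S$ is a circle through $N$, and it passes through $S$. This is precisely the planar stereographic characterization of the circle that the abstract alludes to, and I would prove it directly in this way.

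The main obstacle is justifying that $L_\Gamma$ lies in $\Delta$ and is the bisector, so that the antiparallel relation holds. A half-turn preserving a cone over a non-circular base could a priori have several candidate axes. To handle this, I would first use Theorem \ref{proyecaracte}-style reasoning only for cones with apex $N$. Alternatively, I would note that (\ref{playerita}) applied to the image point pairs forces the axis to map the line through $a,b$ in $\Gamma\cap\Delta$ to a line parallel to $\ell_S$-direction images, which pins down the axis. Once the planar sections are circles, the final stage is short. Each section contains $N$ and $S$, and its tangent at $S$ lies in $\Pi_S$. Since the uniqueness of $\Pi_S$ makes it orthogonal to the normal used to choose $\Delta$, all these circles lie on one sphere: the sphere through $N$ tangent to $\Pi_S$ at $S$. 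Hence $K$ is that sphere.
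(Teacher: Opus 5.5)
\textbf{The gap.} The step you flag as the main obstacle is not resolved: you need the axis $L_\Gamma$ to lie in $\Delta$ and to bisect $\angle aNb$, and none of your suggested justifications gives this.
\begin{itemize}
\item Being the axis of a half-turn that preserves a pointed cone with apex $N$ only gives $N\in L_\Gamma$.
\item Reflective symmetry of $K$ (or of $C_N(K_\Gamma)$) in $\Delta$ is not available at this stage. It does not follow from uniqueness of $\Pi_S$, and in the paper that symmetry (Lemma \ref{revolution}) is itself derived after the axis has been located.
\item A Theorem \ref{teoeli2}-type route would need $N$ to be a regular point, which is not assumed.
\item The sentence about (\ref{playerita}) ``pinning down the axis'' is not an argument.
\end{itemize}
Without this step, the antiparallel relation and everything after it is unsupported.

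\textbf{The missing idea.} It is the paper's Lemma \ref{prerevolution}, and it uses (\ref{playerita}), not (\ref{zapatito}). Since $R^\Gamma$ is an involution, (\ref{playerita}) says that $R^\Gamma$ interchanges $\Pi_S$ and the plane $P'$ of $K'_\Gamma$, which is parallel to $\Gamma$. Because $\Delta$ contains the normal of $\Pi_S$ and $\Gamma\perp\Delta$, the unit normal $u$ of $\Delta$ is parallel to both $\Pi_S$ and $P'$. As $\Gamma$ is not parallel to $\Pi_S$, $\ell:=P'\cap\Pi_S$ is a line parallel to $u$, and $R^\Gamma(\ell)=\ell$. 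A line invariant under a half-turn is either the axis or meets it perpendicularly. Since $N\in L_\Gamma$ and $N\notin\Pi_S$, we get $L_\Gamma\perp u$, i.e.\ $L_\Gamma\subset N+u^{\perp}=\Delta$. Then $R^\Gamma|_\Delta$ is the reflection in $L_\Gamma$. It must map the pair of rays $C_N(K_\Gamma)\cap\Delta$ (from $N$ through $a$ and $b$) to itself, so it swaps them and $L_\Gamma$ is the bisector. This is exactly where your choice of $\Delta$ through the normal of $\Pi_S$ is used.

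\textbf{The rest of your argument.} Granting that step, it is correct and takes a different, shorter route than the paper. The paper rewrites the same relation as the ``stereographic property'' $[r(a),r(b)]\parallel\Pi_S\cap\Delta$. It then proves the planar characterization (Theorem \ref{estereo}) through a symmetry claim and a circumcircle comparison. You instead read the relation as constancy of $|Nx|\,|N\Psi(x)|=|NS|^2$. This identifies $\bd(K\cap\Delta)$ directly with the image of $\ell_S$ under the inversion $\iota$ centred at $N$ with radius $|NS|$. Two small repairs are needed here. Chords through $N$ or parallel to $\ell_S$ give inadmissible $\Gamma$, so link two boundary points through a third one. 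Also note that a closed convex curve contained in a circle is the whole circle.

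\textbf{The final stage.} As written, it presupposes many admissible planes $\Delta$. If $L(N,S)$ were not normal to $\Pi_S$, there would be exactly one such plane, and a single circle does not make $K$ a sphere. Your inversion picture settles this. The circle $\iota(\ell_S)$ has its centre on the perpendicular from $N$ to $\ell_S$. Since $\ell_S$ supports $K\cap\Delta$ at $S$, the circle is tangent to $\ell_S$ at $S$, so its centre is also on the perpendicular at $S$. Hence $NS\perp\ell_S$, and since $\Delta$ contains the normal of $\Pi_S$, $L(N,S)$ is that normal. Then every plane through $L(N,S)$ is admissible and every such section is the circle with diameter $[N,S]$. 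So $K$ is the ball with diameter $[N,S]$, and the paper's revolution lemma is not needed. The paper itself uses $L(N,S)\perp\Pi_S$ only tacitly, when it takes $u\in\Pi_S$ normal to an arbitrary plane through $L(N,S)$, so stating this step explicitly is worthwhile.
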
   
       The property (\ref{zapatito}) means that the cone $C_N(K_{\Gamma})$ is invariant of 
       $R^{\Gamma}$. Notice that $K_{\Gamma}$, $K'_{\Gamma}$ and $\Psi (K_{\Gamma})$ 
       are sections of the cone $C_N(K_{\Gamma})$.  
        
        There are multiple investigations where geometric problems are studied to determine convex 
        bodies from cones with some symmetry or special geometric characteristic associated with such 
        body. See for example \cite{gjmc1}, \cite{gjmc2}, \cite{papiefen}, \cite{egj}, \cite{Amaya}.
          
       We propose the following conjecture.             
      \begin{conjecture}\label{conjeeli}
      Let $K\subset \mathbb{E}^3$ be a convex body and let $N \in K$. If, for all plane $\Gamma$ such that 
      $\Gamma\cap \inte K\not= \emptyset$ and $N\notin \Gamma$, 
      there exists a rotation $R^{\Gamma} $ with property that
       \begin{eqnarray}\label{camita}
       R^{\Gamma} (C_N(K_{\Gamma}))=C_N(K_{\Gamma}),
       \end{eqnarray}
       where $K_{\Gamma}:=\Gamma\cap K$, then $K$ is an ellipsoid of revolution.
      \end{conjecture}      
      
       Let $x\in \bd K$. The point $x$ is said to be a \textit{regular boundary point} of $K$ if there is only one support plane 
       of $K$ at $x$. Finally we give a progress in relation to Conjecture \ref{conjeeli}.      
             
      \begin{theorem}\label{teoeli2}
      Let $K\subset \mathbb{E}^3$ be a convex body and let $N \in K$ be a regular boundary point of $K$. 
      If all the inscribed cones of the body $K$ with apexes at $N$ are axially symmetric then $K$ is a body of revolution.
      \end{theorem}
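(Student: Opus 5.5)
Let $N$ be a regular boundary point and let $\Pi_N$ be the unique support plane of $K$ at $N$. The goal is an axis $L$ through $N$ such that every section of $K$ by a plane perpendicular to $L$ is a disc centred on $L$. The natural candidate is the line $L$ through $N$ orthogonal to $\Pi_N$. The plan has three steps: show that the rotation axis $L_\Gamma$ of every inscribed cone with apex $N$ lies in a plane through $L$; use this to show that sections of $K$ parallel to $\Pi_N$ have symmetries; then upgrade those symmetries to full rotational symmetry about $L$.

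\emph{Step 1 (the axes meet $L$).} Fix a plane $\Gamma$ meeting $\inte K$ with $N\notin\Gamma$. Let $R^{\Gamma}$ be the half-turn about the axis $L_\Gamma$ that preserves $C_N(K_\Gamma)$.
\begin{itemize}
\item Since $R^\Gamma$ fixes the apex $N$, the axis $L_\Gamma$ passes through $N$.
\item Let $\Gamma$ move towards $N$ by parallel translation. The cones $C_N(K_\Gamma)$ then converge to the tangent cone of $K$ at $N$, which is the closed half-space bounded by $\Pi_N$, because $N$ is regular.
\item Each half-turn preserves its cone, so any limit half-turn preserves the half-space. The only half-turns through $N$ preserving a half-space bounded by $\Pi_N$ are those about $L$; thus $L_\Gamma\to L$ for these nested families.
\item For general $\Gamma$, consider the plane $\Sigma_\Gamma$ spanned by $L$ and $L_\Gamma$ (when they differ). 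I will try to show that the half-turn symmetry forces $C_N(K_\Gamma)$ to be symmetric with respect to $\Sigma_\Gamma$ near its "top". This uses that the cone is sandwiched against $\Pi_N$.
\end{itemize}
I expect this step to be the main obstacle. Regularity at $N$ is used only infinitesimally, while the symmetry claimed is global. A cleaner route for Step 1 is to restrict to planes $\Gamma$ parallel to $\Pi_N$ from the start and argue directly, as in Step 2 below.

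\emph{Step 2 (symmetries of horizontal sections).} Take $\Gamma$ parallel to $\Pi_N$, meeting $\inte K$, at height $t$ below $N$.
\begin{itemize}
\item The half-turn $R^\Gamma$ preserves $C_N(K_\Gamma)$.
\item Compare with a second plane $\Gamma'$ through a boundary point of $K_\Gamma$, slightly tilted. The cones $C_N(K_{\Gamma'})$ and $C_N(K_\Gamma)$ share generators along a common boundary chord region.
\item If $L_\Gamma$ were not $L$, then $R^\Gamma$ would map $\Gamma$ to a tilted plane. Its cross-section would then be an affine-projective image of $K_\Gamma$ that must again be a section of the same cone.
\item Letting $t\to 0$ and using Step 1, $L_\Gamma=L$ near $N$. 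A continuity and connectedness argument in $t$ should then propagate this to all heights: the set of $t$ with $L_\Gamma=L$ is open and closed.
\end{itemize}
Once $L_\Gamma=L$ and $\Gamma\perp L$, the half-turn about $L$ maps $\Gamma$ to itself. Hence every horizontal section $K_\Gamma$ is centrally symmetric about the point $\Gamma\cap L$.

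\emph{Step 3 (upgrade to full rotational symmetry).} Central symmetry of the horizontal sections alone is not enough, so I will apply the hypothesis to tilted planes $\Gamma$ as well.
\begin{itemize}
\item For a plane $\Gamma$ through a point of $L$ with an arbitrary tilt, repeat the reasoning of Step 2. This shows that $L_\Gamma$ lies in the plane containing $L$ and the normal of $\Gamma$.
\item Hence every plane $H$ through $L$ is a plane of symmetry for suitable cones.
\item Combining the half-turns for two tilted sections in a common vertical plane $H$ with the half-turn about $L$ gives the reflection of $K$ in $H$.
\item Reflections in all vertical planes $H$ generate the rotation group about $L$.
\end{itemize}
Therefore $K$ is a body of revolution with axis $L$.
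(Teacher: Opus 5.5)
\textbf{There is a genuine gap, and it cannot be repaired, because the statement as written is false.} Take a triaxial ellipsoid $E=\{x^2/a^2+y^2/b^2+z^2/c^2\le 1\}$ with $a>b>c>0$, and any $N\in\bd E$. Every boundary point of $E$ is regular. Every admissible section $E_\Gamma$ is an ellipse. Put $N$ at the origin and write $\Gamma=\{\langle n,v\rangle=1\}$. Then $S_N(E_\Gamma)$ is one nappe of $\{q\le 0\}$, where $q$ is the homogenization of the quadratic defining $E$. This $q$ is a quadratic form of signature $(2,1)$. The half-turn about the eigenline of its negative eigenvalue preserves $q$ and each nappe. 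So every inscribed cone with apex $N$ is axially symmetric (this holds for every ellipsoid and every apex), yet $E$ is not a body of revolution.

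This example shows exactly where your plan breaks.
\begin{itemize}
\item \emph{Step~1.} It is correct only for planes parallel to $\Pi_N$. For other directions the translates reach a plane through $N$ that meets $\inte K$, and the limit cone is a flat half-plane, not the half-space. Even for horizontal planes it only gives $L_\Gamma\to L$ as $t\to 0$, never $L_\Gamma=L$ for some $t>0$. So your open--closed argument in Step~2 has nothing to start from, and you give no mechanism for openness.
\item \emph{Step~2 fails when $N$ is not a vertex of $E$.} The sections parallel to $\Pi_N$ are ellipses centred on the diameter $L(O,N)$, where $O$ is the centre of $E$. This line differs from the normal line $L$. Hence these sections are not centrally symmetric about $\Gamma\cap L$, and $\{t: L_\Gamma=L\}$ is empty.
\item \emph{Step~3 fails when $N=(0,0,c)$ is a vertex.} There Step~2's conclusion holds. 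Set $w=z-c$ and $v=(x,y,w)$. The cone over $E_\Gamma$ is given by $q(v)=x^2/a^2+y^2/b^2+w^2/c^2+\tfrac{2}{c}\,w\langle n,v\rangle$. Its axis $(x,y,w)$, for the negative eigenvalue $\mu$, satisfies $(x,y)\parallel\bigl(n_1/(\mu-a^{-2}),\,n_2/(\mu-b^{-2})\bigr)$. This is not parallel to $(n_1,n_2)$ when $n_1n_2\neq 0$. So $L_\Gamma$ does not lie in the plane spanned by $L$ and the normal of $\Gamma$. Indeed only two vertical planes are planes of symmetry of $E$.
\end{itemize}

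The obstacle you flagged yourself is the fatal one: regularity at $N$ is used only infinitesimally, while the claimed symmetry is global. The paper's proof does not get around it either. It takes a plane $\Delta\supset L$ and the line $T\subset\Pi_N$ through $N$ perpendicular to $\Delta$. It then uses planes $\Gamma\supset T$, approximates them by $\Gamma_n\parallel T$, and uses regularity to identify the limit axis $l$ as $\Delta\cap\Gamma$. But the limiting cone is then the flat tangent half-plane of $K_\Gamma$ at $N$, which determines $l$ and carries no information about $K_\Gamma$. From $L_n\to l$ one obtains only $R_n(K_n)\to R_l(K_\Gamma)$, not the asserted $R_n(K_n)\to K_\Gamma$. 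The latter is the very symmetry to be proved, and the vertex example above shows it fails.
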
      

      The proof of the Theorem \ref{proyebody} consists of demonstrating:
      \begin{enumerate}
       \item the axes of cones defined by sections parallel to a direction parallel to $\Pi_S$ are coplanar (Lemma \ref{prerevolution}).
       \item  the solid $K$ is a solid of revolution with axis the line $L(N,S)$ (Lemma \ref{revolution}).
       \item  the sections of $K$ with planes containing $L(N,S)$ have the stereographic property (see the definition at the Section \ref{stereoproperty}) with 
       respect to $N,S$ and $L(N,S)$ and, therefore, are circles with dia\-me\-ter $[N,S]$ (Lemma \ref{seccionestereo}).
       \end{enumerate}  
       
      \section{Definitions and notation}\label{defs}
       Let $\mathbb{E}^{d}$ be the Euclidean space of dimension $d$ endowed with the usual scalar 
       product $\langle \cdot, \cdot\rangle : \mathbb{E}^{d} \times \mathbb{E}^{d} \rightarrow \mathbb{R}$. 
       Let $\mathbb{S}^{d-1}=\{x\in \mathbb{E}^{d}: \|x\| = 1\}$ be the unit sphere in $\mathbb{E}^{d}$. 
       Let $x, y \in \mathbb{R}^n$, we denote by $L(x, y)$ the line through $x$ and $y$, and by $[x, y]$ the line 
       segment connecting them.

        A {\it convex body} $K\subset {\mathbb E^d}$, $d\ge 2$,   is a convex compact set with non-empty interior. 
        A \textit{convex hypersurface} is the boundary of a convex body $K$ in $\mathbb{E}^{d}$ and it will be 
        denoted by $\bd K$. We will denote by $\inte K$ the set $K\backslash \bd K$.
        A {\it chord} of a convex body $K$ is any line segment $[x, y]$ in $K$ 
	such that $x,y\in \bd K$. An excellent book 
        where you can consult the basic concepts and results of convexity is \cite{Constantwidth}.
        \begin{figure}[H]
    \centering
     \includegraphics [width=.8\textwidth]{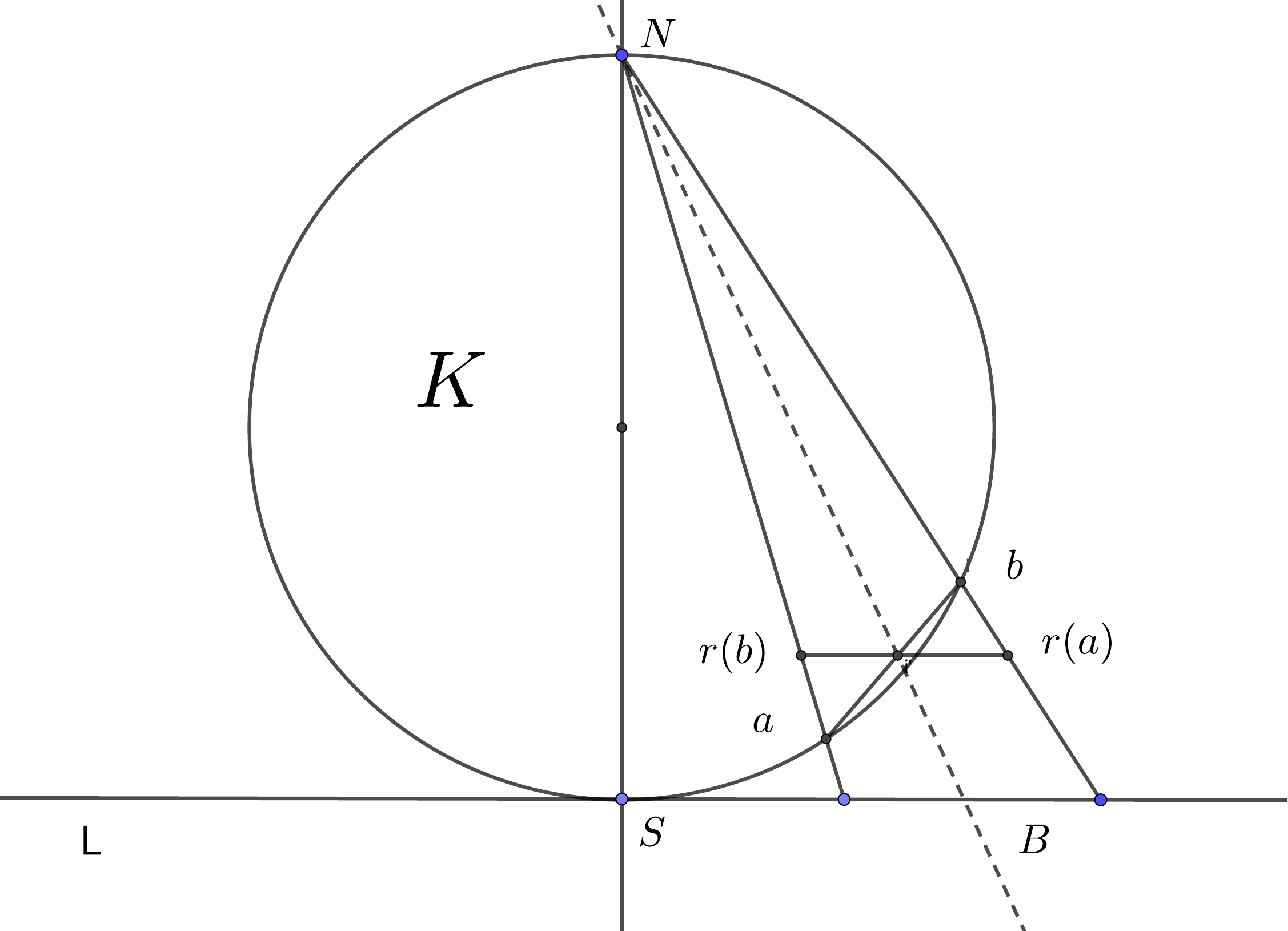}
    \caption{The Stereographic Property of a convex figure.}
    \label{nina}
     \end{figure}
     \begin{figure}[H]
      \centering
       \includegraphics [width=.8\textwidth]{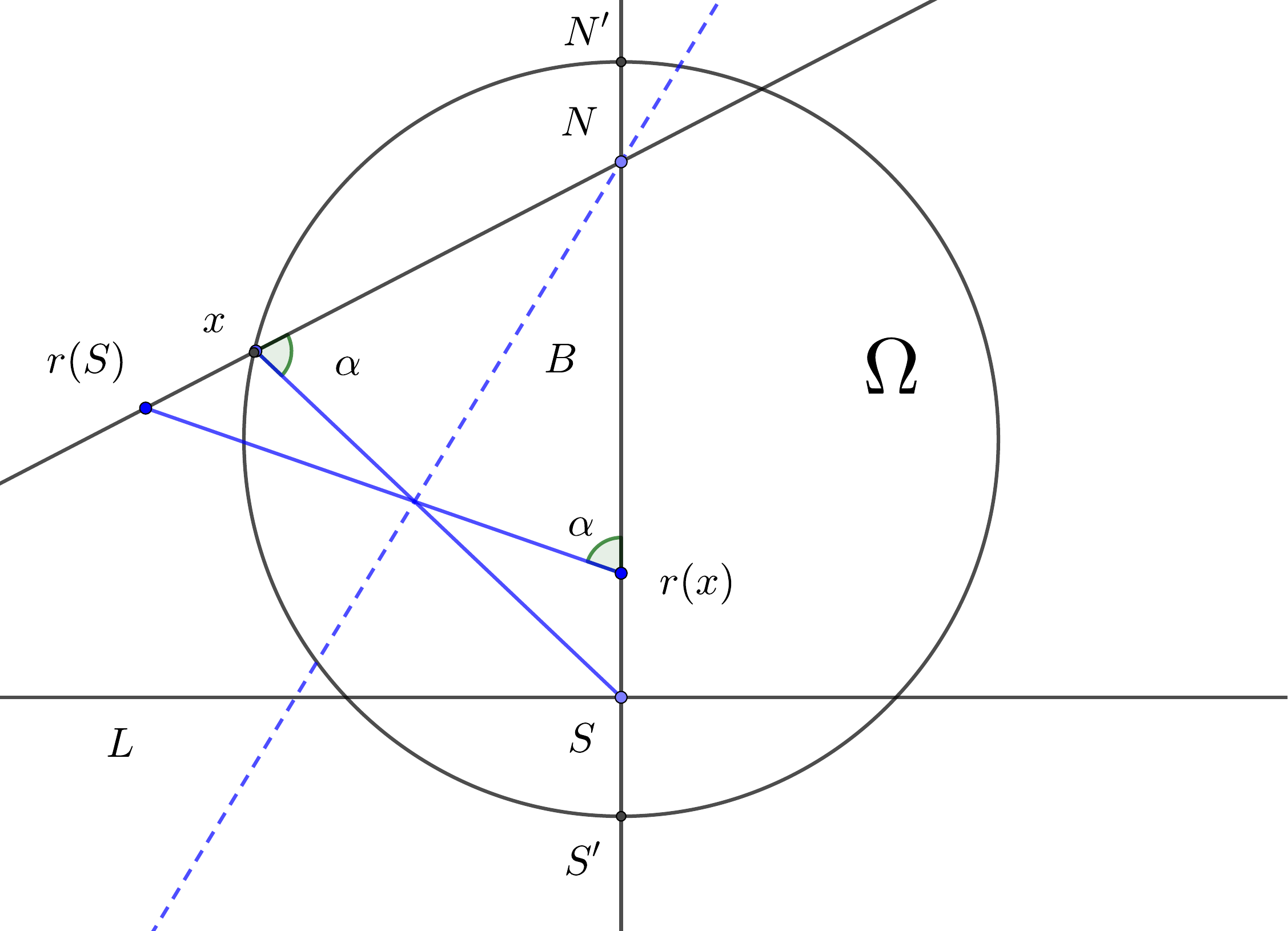}
       \caption{The Stereographic Property characterizes the circle. Case $N\not=N', S\not=S'$.}
        \label{delfin}
     \end{figure}     
        Let $K \subset \mathbb{E}^{d}$ be a convex body, let $L\subset \mathbb{E}^{3}$ be a line and let $\Pi$ be 
        a  hyperplane.  We denote by $R_{L}:\mathbb{E}^{3} \rightarrow \mathbb{E}^{3}$ the rotation by the angle 
        $\pi$ around the axis $L$ and by $S_{\Pi}: \mathbb{E}^{3}\rightarrow \mathbb{E}^{3}$ the reflection with 
        respect to $\Pi$. A line $L\subset \mathbb{E}^{3}$ is said to be an 
        {\it axis of symmetry} if   
        \[
        R_L (K)=K.
        \] 
        The hyperplane $\Pi$ is said to be a \textit{hyperplane of symmetry} of $K$ if 
         \[
         S_{\Pi}(K)=K.
         \]                     
                \section{Characterization of the Circle.}\label{stereoproperty}
        Let $K\subset \mathbb{E}^2$ be a convex body and let $N,S\in \bd K$, $N\not=S$. Suppose that the 
        support line of $K$ at $S$ is unique. We denote such line as $L$. For every chord $[a,b]$ of $K$ 
        we denote by $B$ the bisectriz of the angle defined by the lines $L(a,N)$ 
        y $L(b,N)$ and by $r(a),r(b)$ the reflections of $a,b$ with respect to $B$ (see fig. \ref{nina}). 
        The convex body $K$ possesses the \textit{stereographic property} with respect to the points $N,S$ 
        and the line $L$ if, for every chord $[a,b]$ of  
        $K$, the chord $[r(a),r(b)]$ is parallel to $L$.
                     
             Let $D^2:=\{(x,y)\in  \mathbb{R}^2: \|(x,y)\|\leq 1\}$ be the disc and let $\Omega:=\bd D^2$ be the circle.   
        \begin{theorem}\label{estereo}
         Let $K\subset  \mathbb{R}^2 $ be a convex body.  $K$ possesses the stereographic property with respect 
         to the points $N,S$ and the line $L$ if and only 
         if $K$ is a circle with diameter $[N,S]$. 
         \end{theorem}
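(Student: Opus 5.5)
We prove the two implications separately, using one elementary fact about a triangle $Nab$. Let $B$ be the bisector of the angle at $N$. Reflection in $B$ swaps the lines $L(N,a)$ and $L(N,b)$. Hence the image of the line $L(a,b)$ is a line through $r(a)\in L(N,b)$ and $r(b)\in L(N,a)$. Its direction is the \emph{antiparallel} to $L(a,b)$ with respect to the angle at $N$, that is, the direction making with $L(N,a)$ the angle $\angle Nba$. So the stereographic property says exactly this: for every chord $[a,b]$ of $K$, the line $L$ (through $S$) makes with $L(N,a)$ an angle equal to $\angle Nba$, with the orientations fixed by the figure.

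\emph{Sufficiency.} Let $K$ be a disc whose boundary circle $\Omega$ has diameter $[N,S]$, and let $T_N$ be the tangent at $N$. By the tangent--chord angle theorem, $T_N$ makes with the chord $[N,a]$ the inscribed angle $\angle Nba$. Thus $T_N$ is antiparallel to $L(a,b)$, and after the reflection in $B$ the chord $[r(a),r(b)]$ is parallel to $T_N$. Since $[N,S]$ is a diameter, $T_N$ is parallel to the tangent $L$ at $S$. This gives the property, including the degenerate cases $a=N$ or $b=N$, which follow by continuity.

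\emph{Necessity.} Fix a point $a\in\bd K$ with $a\neq N$. For every $b\in\bd K\setminus\{N,a\}$ lying on one of the two arcs of $\bd K$ determined by $N$ and $a$, the reformulation gives that $\angle Nba$ equals the angle between $L$ and $L(N,a)$. This quantity does not depend on $b$. By the converse of the inscribed angle theorem, all such $b$ lie on one circular arc through $N$ and $a$.
\begin{itemize}
\item Doing the same on the other arc (with the supplementary angle, since $b$ is now on the other side of $L(N,a)$) shows that $\bd K$ is contained in a union of two circular arcs with endpoints $N$ and $a$.
\item Using two different choices $a,a'$ forces these arcs to belong to a single circle, because three non-collinear boundary points determine it. Hence $\bd K$ is a circle $\Omega$.
\item Applying the sufficiency computation to $\Omega$, the reflected chords are parallel to the tangent $T_N$ at $N$. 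The hypothesis then gives $T_N\parallel L$, where $L$ is the tangent to $\Omega$ at $S$.
\item Two distinct points of a circle with parallel tangents are antipodal, so $[N,S]$ is a diameter.
\end{itemize}

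The main obstacle is the orientation bookkeeping in the necessity part. We must check that the angle equality holds with consistent signs on each arc, so that the locus of $b$ really is a single arc and not its mirror image. We must also check that the arcs obtained for different $a$ glue into one circle, and that $a$ can be chosen so the three points used are non-collinear. I would handle this by placing $N$ at the origin with $L$ horizontal, and writing the condition as an equality of oriented angles modulo $\pi$ between lines. In that form the inscribed angle theorem ("$b$ is concyclic with $N,a$ and a fixed fourth point iff the oriented angle is constant mod $\pi$") applies to all $b$ at once, with no case split by arc.
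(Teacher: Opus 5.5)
Your proof is correct, but it takes a different route from the paper.

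\textbf{The paper's route.} It first proves a symmetry claim. Applying the property to chords parallel to $L$, it shows that the line through $N$ perpendicular to $L$ is an axis of symmetry of $K$ and contains $S$, so $L\perp L(N,S)$. It then takes the circumcircle $\Omega$ of $K$, whose centre lies on this axis, and splits into cases according to whether $N,S$ are the endpoints of the diameter of $\Omega$ on the axis. Applied to a chord $[x,S]$, the property gives $\angle NxS=\pi/2$. So a contact point $x\in\bd K\cap\Omega$ would have to lie on the circle with diameter $[N,S]$, which rules out every case except the diametral one. In that last case, each $y\in\bd K$ is compared with the point of $\Omega$ on the ray from $N$ through $y$, using the sufficiency for $\Omega$, which the paper does not prove.

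\textbf{Your route.} You show that for a single chord the property is equivalent, via antiparallels and the tangent--chord theorem, to the circle through $N,a,b$ being tangent at $N$ to the parallel to $L$. Fixing one suitable $a$ then determines a single circle that contains $\bd K$. Its tangency at $N$, together with $L$ being the tangent at $S$, makes $[N,S]$ a diameter.

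\textbf{What each buys.} Your argument avoids the symmetry claim, the circumcircle and the case analysis, including the cases the paper dismisses as analogous or easy. It proves both directions with the same computation and uses only chords through one fixed point. The paper's argument mirrors its three-dimensional proof: symmetry first, then circular sections. Its key step, $\angle NxS=\pi/2$ once $L\perp L(N,S)$, in fact holds for every $x\in\bd K\setminus\{N,S\}$, and that alone would finish the proof without the circumcircle.

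\textbf{Details for your write-up.} Choose $a$ so that $L(N,a)$ meets $\inte K$, so that $\bd K\cap L(N,a)=\{N,a\}$ and the two arcs lie on opposite sides of $L(N,a)$. Also choose $a$ so that $L(N,a)$ is not parallel to $L$. If some $a\in\bd K\setminus\{N\}$ had $L(N,a)\parallel L$, the directed-angle condition would force every $b$ onto $L(N,a)$, which is absurd. With directed angles the locus of $b$ is already one whole circle, so the step with a second point $a'$ can be dropped.
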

          
          \begin{proof}It is easy to prove that if $K$ is a circle with diameter $[N,S]$, then $K$ possesses the stereographic 
         property with respect to the points $N,S$ and the line $L$. Thus we will 
         only consider the opposite direction, i.e., we will assume that $K$ possesses the stereographic 
         property with respect to the points $N,S$ and the line $L$ and we will prove that $K$ is a circle. 
                  
         \textbf{Claim.} The convex body $K$ is symmetric with respect to the line $L(N,S)$. 
         
          \textit{Proof}. Let $x\in \bd K$. Let $y\in \bd K$ such that the line $L(x,y)$ is parallel to $L$. By the 
          stereographic property, the line $L(r(x),r(y))$ is parallel to $L$. The only possibility is that the bisectriz $B$ of the 
          angle defined by the lines $L(N,x)$ and $L(N,y)$ is perpendicular to $L$. It follows that $y=r(x)$, $x=r(y)$ and the 
          mid-point of $[x,y]$ is in $B$. By virtue of the arbitrariness of $x\in K$, it follows that $L(N,S)$ is the perpendicular to $L$
          passing through $N$ and is the line of symmetry of $K$ (notice that $S=L\cap \bd K$, otherwise, it is easy to see that 
          it would contradict the stereographic property of $K$).
          $\square$
          
          Without loss of generality we may assume that the circumcircle of $K$ is $\Omega$. By virtue of the unicity of 
          the circumsphere $\Omega$, the center $c$ of $\Omega$ is in the line of symmetry $L(N,S)$ of 
         $K$ (otherwise, the image of $\Omega$ by the reflection in the line $L(N,S)$ it would be a circle of minimus ratio, 
         different than $\Omega$, which contains $K$, since $K$ would be invariant by such reflection). Let 
         \[
         \{N',S'\}:=L(N,S)\cap \Omega.
         \]  
         First we consider the case $N\not=N'$ and $S\not=S'$. There exists $x\in \bd K \cap \Omega$, $N\not=x\not=S$. 
         Since $K$ possesses the stereographic property with respect to the points $N,S$ and the line $L$,  the line segment 
         $[r(x),r(S)]$ is parallel to $L$, i.e., the angle $\alpha:=Nr(x)r(S)$ is equal to $\frac{\pi}{2}$. Since the triangles 
         $Nr(x)r(S)$ and $NxS$ are congruent the angle $NxS$ is equal to $\alpha$ (see fig. \ref{delfin}). 
         Thus $x\in \Omega'$, where $\Omega'$ is the circle with diameter $[N,S]$. Because $c\in L(N,S)$ and 
         $N\not=N', S\not=S'$ it follows that $\Omega'\subset \inte D^2$. Therefore $x\in \inte D^2$ but contradicts the choice of 
         $x$.        
         
         The case $N=N', S\not=S'$ and the case $N\not=N', S =S'$ can be considered in analogous way.
             
          Now we suppose that $N=N', S=S'$. Let $y\in \bd K$, $y\not=N,S$. Let $x:=L(N,y)$. Suppose that $x\not=y$. Then 
          $[r(y),r(S)]\not=[r(x),r(S)]$ (see fig. \ref{bebe}). Since $K$ possesses the stereographic property with respect to 
          the points $N,S$ and the line $L$, the segment $[r(y),r(S)]$ is parallel to $L$. On the other hand, since 
         $\Omega$ possesses the stereographic property with respect to the points $N,S$ 
        and the line $L$, the segment $[r(x),r(S)]$ is parallel to $L$. Therefore $[r(y),r(S)]$ and 
         $[r(x),r(S)]$ are parallel. However 
         \[
         r(S)=[r(y),r(S)]\cap  [r(x),r(S)].
         \]
         This contradiction shows that 
         $x=y$. Hence $\Omega=\bd K$. 
               \end{proof}
         \begin{figure}[H]
      \centering
       \includegraphics [width=.8\textwidth]{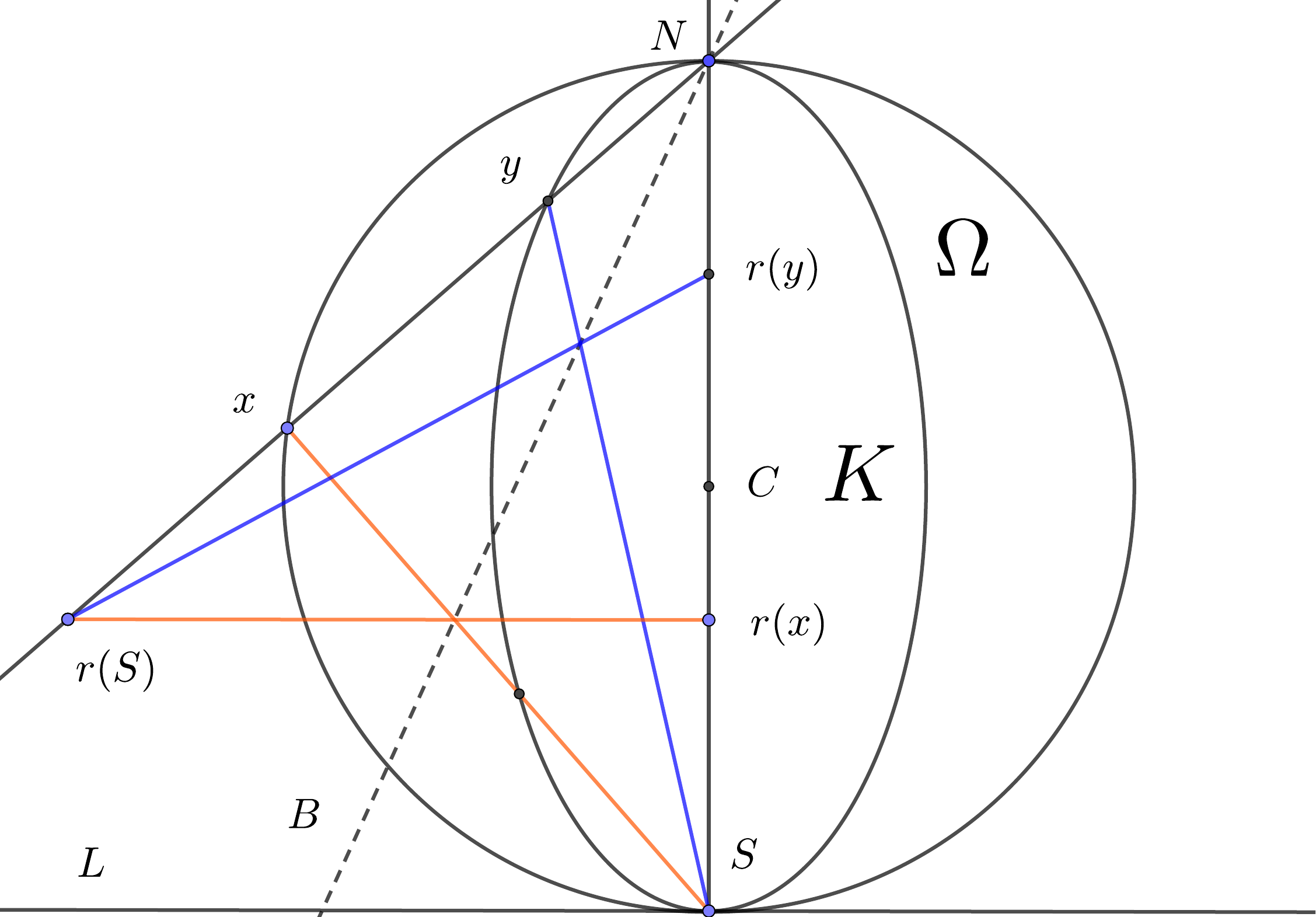}
       \caption{The Stereographic Property characterizes the circle.  Case $N=N', S=S'$.}
        \label{bebe}
     \end{figure}
       \section{Proof of Theorems \ref{proyeeli}, \ref{proyeesfera},  \ref{proyecaracte} and  \ref{teoeli2}}     
 \textbf{Proof of Theorem \ref{proyeeli}.}
       Let $\Gamma$ be a plane such that $\Gamma\cap \inte K\not= \emptyset$ and $N\notin \Gamma$. Let 
       $\Delta$ be a 
       plane perpendicular to $\Gamma$ and containing $L(N,S)$. Since $E$ is a body of revolution every 
       plane containing the 
       axis is a plane of symmetry of $E$. Thus the section $\Gamma \cap E$ has the line $\Delta\cap \Gamma $ 
       as line of symmetry and the cone $C_N(E_{\Gamma})$ has $\Delta $ as plane of symmetry. Let 
       $B$ the bisectriz of the angle $\Delta \cap  C_N(E_{\Gamma})$ and let $H$ be a plane 
       perpendicular to $B$. Since every section of an elliptical cone is an ellipse and since $H$ is a 
       plane perpendicular to $B$, the ellipse $H\cap C_N(E_{\Gamma})$ has the line $\Delta \cap H$ as 
       a line of symmetry. On the other hand, by the choice of $B$, the center of the ellipse 
       $H\cap C_N(E_{\Gamma})$ is in $B$. Therefore the cone $C_N(E_{\Gamma})$ is axially 
       symmetric with axis $B$, i.e., the cone $C_N(E_{\Gamma})$ satisfies the equation (\ref{shorcito}). 
       $\square$
       
       \textbf{Proof of the Theorem \ref{proyeesfera}.}
       Let $\Gamma$ be a plane such that $\Gamma \cap \inte B^3\not=\emptyset$ 
       and let $x\in \mathbb{S}^2 \backslash \Gamma$. Let $y\in \mathbb{S}^2$ 
       such that 
       the line segment $[x,y]$ is a diameter of 
       $\mathbb{S}^2$. By virtue that $\mathbb{S}^2$ is an ellipsoid of revolution 
       with axis the line $L(x,y)$, by Theorem \ref{proyeeli} the cone $C_x(S_{\Gamma})$ is axially symmetric, where $S_{\Gamma}:=\Gamma \cap \mathbb{S}^2$.
              $\square$

              \textbf{Proof of the Theorem \ref{proyecaracte}.}
         To show that the body $K$ is a sphere, we will show that all sections of $K$ are circles.
         Let $\Gamma$ be a plane such that $\Gamma\cap \inte K\not=\emptyset$. To show that the 
         section $K_{\Gamma}$ is a circle, we will show that for every point $x\in \bd K_\Gamma$ there exists a line 
         of symmetry of $K_{\Gamma}$ that passes through $x$.
         
         Let $x\in \bd K_\Gamma$ and let $\{x_n\}\subset \bd K\backslash \Gamma$ be a sequence such that 
         $x_n\rightarrow x$ 
         when $n\rightarrow \infty$. By the hypothesis, for each $n$ there exists a line $L_n$ through 
         $x_n$ such that the rotation $R_{n}^\Gamma$ about $L_n$ by an angle $\pi$ leaves the cone 
         $C_{x_n}(K_\Gamma)$ invariant. Due to the compactness of $\mathbb{S}^2$ and since 
         $x_n\rightarrow x$ there exists a subsequence of $\{L_n\}$, which will again be denoted by 
         $\{L_n\}$, and a line $L$ in $\Gamma$ such that 
         \begin{eqnarray}\label{puro}
         L_n\rightarrow L \textrm{ }\textrm{ and } \textrm{ }x\in L.
         \end{eqnarray}
         We claim that the line $L$ is line of symmetry of $K_{\Gamma}$. Let 
         $y\in \bd K_{\Gamma}\backslash L$ and let 
         \[
         y_n:=R_n^{\Gamma}(y)\in R_n^{\Gamma}(K_{\Gamma}).
         \]
          By (\ref{puro}),
         \[
         R_n^{\Gamma}(K_{\Gamma})\rightarrow K_{\Gamma}.
         \]
         Thus there exists $z\in K_{\Gamma}$ such that $y_n \rightarrow z$. Since we have (\ref{puro}) and, 
         for each $n$, the segment $[y,y_n]$ is perpendicular to $L_n$ and has its midpoint at $L_n$, we 
         conclude that the segment $[y,z]$ is perpendicular to $L$ and has its midpoint at $L$. By the arbitrariness of $y$ it follows that $L$ is line of symmetry of $K_{\Gamma}$. $\square$
       
         
           \textbf{Proof of Theorem \ref{teoeli2}.} We denote by $\Pi_N$ the support plane of $K$ at $N$ and 
           by $L$ the line perpendicular to $\Pi_N$ which pass through $N$.  We claim that $K$ is a body of 
           revolution with axis the line $L$. In order to prove this we are going to prove that every plane 
           containing $L$ is a plane of symmetry of $K$. 
           
           Let $\Delta$ be a plane containing $L$ and let $T\subset \Pi_N$ 
           be a line perpendicular to $\Delta$ passing through $N$. In order to prove that $\Delta$ is a plane of 
           symmetry of $K$ we will prove that for all plane $\Gamma$ such that $\Gamma \cap \inte K\not=\emptyset$ 
           and $T\subset \Gamma$, the section $\Gamma \cap K$ has the line $\Delta \cap \Gamma$ as line of symmetry.

          Let $\Gamma$ be a plane such that $\Gamma\cap \inte K\not=\emptyset$ and $T\subset \Gamma$. 
          Let $\{\Gamma_n\}$ be a sequences of planes such that $\Gamma_n$ is parallel to $T$,   
          $\Gamma_n\cap \inte K\not=\emptyset$  and $\Gamma_n\rightarrow \Gamma$, when $n\rightarrow \infty$. 
          Let $K_n :=\Gamma_n\cap   K$ and let $K_{\Gamma} :=\Gamma \cap   K$. By the hypothesis, for every $n$, 
          there exists a rotation $R_n$ with axis $L_n$ by an angle $\pi$ such that 
          \begin{eqnarray}\label{conchota}
          R_n (C_N(K_n ))=C_N(K_n ).
          \end{eqnarray}
          Due to the compactness of $\mathbb{S}^2$ there exists a subsequence of $\{L_n\}$, which will again be 
          denoted by $\{L_n\}$, and a line $l$ in $\Gamma$ such that 
         \begin{eqnarray}\label{purote}
         L_n\rightarrow l \textrm{ }\textrm{ and } \textrm{ }N\in l.
         \end{eqnarray}       
         Let $x\in \bd K_{\Gamma}\backslash l$. By the choice of $\Gamma_n$ it follows that 
         $K_{n}\rightarrow K_{\Gamma}$. Thus there exists a sequence $\{x_n\}$ such 
         that $x_n\in \bd K_n$ and $x_n\rightarrow x$. 
         Let 
         \[
         y_n:=R_n(x_n)\in R_n(K_n).
         \] 
         Since $K_{n}\rightarrow K_{\Gamma}$ and since (\ref{purote}) holds it follows that 
         $R_n(K_{n})\rightarrow K_{\Gamma}$. Thus there exists $y\in K_{\Gamma}$ such that $y_n \rightarrow y$. 
         Since we have (\ref{purote}) and, for each $n$, the segment $[x_n,y_n]$ is perpendicular to $L_n$ and has 
         its midpoint at $L_n$, we conclude that the segment $[x,y]$ is perpendicular to $l$ and has its midpoint at $l$. 
         By the arbitrariness of $x$ it follows that $l$ is line of symmetry of $K_{\Gamma}$. 
         
         
         Since $N$ is regular point of $K$, the line $T$ is the unique support line of $\Gamma \cap K$ at $N$. Thus $l$ 
         is perpendicular to $T$. It follows that $l=    \Delta \cap \Gamma$. 
          $\square$

              \section{Proof of the Proposition \ref{proyecirculo}}     
       \textbf{Proof of the Proposition \ref{proyecirculo}.}
       Let $\Gamma$ be a plane such that $\Gamma \cap \inte B^3\not=\emptyset$ 
       and let $N\notin \Gamma$.  By Theorem \ref{proyeesfera}, the cone $C_N(S_\Gamma)$ is axially 
       symmetric, where $S_{\Gamma}:=\Gamma \cap \mathbb{S}^2$. Let $L_{\Gamma}$ be axis of the 
        rotation $R^{\Gamma}$ such that $R^{\Gamma}(C_N(S_\Gamma))=C_N(S_\Gamma)$. Let $\Delta$ 
        be a plane of symmetry of $\mathbb{S}^2$ containing $L(N,S)$ and perpendicular to $\Gamma$. 
        Since $\Delta$ is plane of symmetry of the cone $C_N(S_\Gamma)$ the 
        axis $L_{\Gamma}$ is contained in $\Delta$ (otherwise, $C_N(S_\Gamma)$ would have two axis 
        of symmetry which is absurd). Thus $R^{\Gamma}|_{\Delta}$ is a reflection with respect to the 
        bisectriz $B$ of the angle $\Delta \cap C_N(S_\Gamma)$. Since $S_\Gamma$ is a circle, the 
        figure $R^{\Gamma}(S_\Gamma)$ is a circle. Given that $\Delta$ and $\Gamma$ are perpendicular 
        and $\Delta$ is plane of symmetry of $\mathbb{S}^2$, the chord 
        $[a,b]:=(\Delta \cap \Gamma)\cap S_{\Gamma}$ is a diameter of $S_{\Gamma}$. Hence 
        $R^{\Gamma}([a,b])=R^{\Gamma}|_{\Delta}([a,b])$ is the diameter of 
        $R^{\Gamma}(S_{\Gamma})$. By the stereographic property of the circle $\Delta\cap \mathbb{S}^2$ 
        the line segment $R^{\Gamma}|_{\Delta}([a,b])$ is parallel to $\Delta\cap \Pi_S$. Consequently, the 
        homothety, with center of homothety at $N$, which sends $R^{\Gamma}(\Gamma)$ into $\Pi_S$ 
        sends $R^{\Gamma}(S_{\Gamma})$ into $\Psi(S_\Gamma)$. Therefore $\Psi(S_\Gamma)$ is a circle. $\square$

       \section{Proof of Theorem \ref{proyebody}}
        We take a system of coordinates $(x,y,z)$ of $\mathbb{E}^3$ such that the origin is 
        $S$ and the plane $\Pi_S$ is the plane $z=0$. For $u\in \mathbb{S}^2$, we denote by 
        $L(u)$ the line parallel to $u$, $S\in L(u)$, and by $u^{\perp}$ the plane perpendicular to $u$, 
        $S\in u^{\perp}$. 
        
        \begin{lemma}\label{prerevolution}
        Let $u\in (\mathbb{S}^2\cap \Pi_S)$ and let $\Gamma$ be a plane such that 
        $\Gamma\cap \inte K\not= \emptyset$, $N\notin \Gamma$ and $\Gamma$ parallel to $L(u)$ 
        and no parallel to $\Pi_S$. Then the axis $L_{\Gamma}$ of the rotation $R^{\Gamma}$ which satisfies 
        (\ref{zapatito}) and (\ref{playerita}) is contained in the plane $N+u^{\perp}$.
        \end{lemma}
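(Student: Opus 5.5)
The plan is to exploit that $\Gamma$, $\Pi_S$ and, after rotation, $R^{\Gamma}(\Gamma)$ all contain the direction $u$. From this we deduce that $R^{\Gamma}$ must fix the direction $u$ up to sign. That will force the axis to be perpendicular to $u$, and since it passes through $N$, to lie in $N+u^{\perp}$.

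\textbf{Step 1 (the axis passes through $N$).} The apex $N$ is the unique vertex of the cone $C_N(K_\Gamma)$. It is unique because $K_\Gamma$ is two-dimensional with $N\notin\Gamma$, so the cone is pointed and not a wedge. Any isometry leaving $C_N(K_\Gamma)$ invariant therefore fixes $N$, so $N\in L_\Gamma$. It then suffices to show that the direction $v$ of $L_\Gamma$ satisfies $v\perp u$.

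\textbf{Step 2 (directions).} Since $\Gamma\parallel u$, the plane $R^{\Gamma}(\Gamma)$ contains the direction $R^{\Gamma}(u)$. By (\ref{playerita}), $\Psi(K_\Gamma)\subset\Pi_S$ equals $R^{\Gamma}(K'_\Gamma)$. Here $K'_\Gamma$ is homothetic to $K_\Gamma$ from $N$, so it lies in a plane $\Gamma'$ parallel to $\Gamma$. As $K_\Gamma$ is two-dimensional, the planes match: $R^{\Gamma}(\Gamma')=\Pi_S$. Hence $R^{\Gamma}(u)$ is parallel to $\Pi_S$, and $u$ is parallel to $\Pi_S$ as well.

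Next we use the geometry of the cone. The lines $\Gamma'\cap\Pi_S$ and $\Gamma\cap\Pi_S$ are both parallel to $u$ (they are nonempty lines because $\Gamma\not\parallel\Pi_S$). The rotation $R^{\Gamma}$ swaps the "plane-direction" of $\Gamma'$ with that of $\Pi_S$. Because it is an involution, it also maps $\Pi_S$-directions to $\Gamma'$-directions. So $R^{\Gamma}$ preserves the intersection direction of the two planes, which is $u$. Thus $R^{\Gamma}(u)=\pm u$.

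\textbf{Step 3 (conclusion).} A rotation by $\pi$ about an axis with direction $v$ sends $u$ to $2\langle u,v\rangle v-u$. The condition $R^{\Gamma}(u)=\pm u$ therefore forces either $u\perp v$ or $u=\pm v$. We rule out $u=\pm v$ as follows. If the axis were parallel to $u$, the rotation $R^{\Gamma}$ would fix $\Gamma'$'s $u$-direction and act on the normal directions as the reflection $w\mapsto -w$ in $u^\perp$. It would then send $\Gamma'$ to a plane parallel to $\Gamma'$, which cannot be $\Pi_S$ since $\Gamma$ is not parallel to $\Pi_S$. Hence $v\perp u$. Together with $N\in L_\Gamma$ from Step 1, this gives $L_\Gamma\subset N+u^{\perp}$.

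The main obstacle is making Step 2 rigorous, namely justifying that $R^{\Gamma}(\Gamma')=\Pi_S$ from the set equality (\ref{playerita}). This requires checking that the homothety ratio is nonzero, which holds because $\Psi(K_\Gamma)$ is two-dimensional. The case $u\parallel v$ also needs the care shown in Step 3.
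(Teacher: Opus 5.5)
Your proof is correct and follows the paper's idea: $R^{\Gamma}$ is an involution carrying the plane of $K'_{\Gamma}$ onto $\Pi_S$, so it preserves what the two planes share, which is parallel to $u$, and this forces the axis to be perpendicular to $u$. You work with directions instead of the paper's invariant segment $I=\Psi(K_{\Gamma})\cap K'_{\Gamma}$, which spares you from checking that $I$ is nonempty and nondegenerate, and you also make explicit two points the paper leaves tacit: that $N\in L_{\Gamma}$, and that the axis cannot be parallel to $u$ (the omitted case in which the invariant line is $L_{\Gamma}$ itself).
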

        \begin{proof}
        The plane of $K'_{\Gamma}$ is parallel to $\Gamma$. Thus $K'_{\Gamma}$ is parallel to $L(u)$. 
         Consequently $I:=\Psi(K_{\Gamma})\cap K'_{\Gamma}$ is a line segment parallel to $L(u)$. By 
         (\ref{playerita}), $R^{\Gamma}(I)=I$ and from here it follows that $L_{\Gamma}$ is perpendicular to 
         $I$ (only the lines perpendicular to $L_{\Gamma}$ and which have non-empty intersection with 
         $L_{\Gamma}$ are invariant of $R^{\Gamma}$), i.e., $L_{\Gamma}$ is perpendicular to $L(u)$. 
         Therefore $L_{\Gamma}\subset (N+u^{\perp})$. 
        \end{proof}

       \begin{lemma}\label{revolution}
        The body $K$ is a body of revolution with axis the line $L(N,S)$.
        \end{lemma}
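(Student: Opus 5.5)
We will show that every plane $\Delta$ containing the line $L(N,S)$ is a plane of symmetry of $K$. Fix such a plane $\Delta$ and let $u\in \mathbb{S}^2\cap\Pi_S$ be the unit vector perpendicular to $\Delta$, so that $N+u^{\perp}=\Delta$ provided $\Delta$ contains $N$ and is perpendicular to $\Pi_S$. First, then, we must show that $L(N,S)$ is perpendicular to $\Pi_S$; alternatively we work with the plane $N+u^{\perp}$ directly and show at the end that it contains $S$. Our plan is to copy the strategy of the proof of Theorem \ref{teoeli2}. For every plane $\Gamma$ parallel to $L(u)$ (not parallel to $\Pi_S$, meeting $\inte K$, with $N\notin\Gamma$), we will show that the section $K_\Gamma$ is symmetric with respect to the line $(N+u^{\perp})\cap\Gamma$. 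Since the sections of $K$ by the planes parallel to $L(u)$ fill $K$ (up to the limiting planes, handled by continuity), the reflection $S_{N+u^\perp}$ will then map $K$ onto itself.

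The key step is the section symmetry. By Lemma \ref{prerevolution}, the axis $L_\Gamma$ of $R^\Gamma$ lies in $N+u^{\perp}$ and passes through $N$, since $R^\Gamma$ fixes the cone $C_N(K_\Gamma)$, whose apex is $N$. Hence $R^\Gamma$ commutes with the reflection $S_{N+u^\perp}$. On the plane $N+u^{\perp}$ it acts as the reflection in $L_\Gamma$, and on the direction $u$ it acts as $-\mathrm{id}$. Consequently the composition $S_{N+u^\perp}\circ R^\Gamma$ is the reflection in the plane $P_\Gamma$ that contains $L_\Gamma$ and $u$. That plane is perpendicular to $N+u^\perp$, and since it contains a line through $N$ parallel to $u$, it is parallel to $\Gamma$'s direction $u$. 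We now use (\ref{playerita}). Both $\Psi(K_\Gamma)\subset\Pi_S$ and $K'_\Gamma$ are sections of the cone. Their intersection $I$ is a segment parallel to $u$, and $R^\Gamma$ swaps the two planes and fixes $I$. We intend to show that $K_\Gamma$ (equivalently $K'_\Gamma$, which is homothetic to it from $N$) is symmetric in the line $(N+u^\perp)\cap\Gamma$. For this we compare the two decompositions of the cone $C_N(K_\Gamma)$: it is $R^\Gamma$-invariant by (\ref{zapatito}), and its intersections with planes parallel to $u$ should be invariant under $S_{N+u^\perp}$. The latter follows if the cone itself is $S_{N+u^\perp}$-invariant. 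To obtain that, we would argue that $C_N(K_\Gamma)$ is invariant under the composition above, i.e. under the reflection in $P_\Gamma$. Here we would exploit the fact that $\Psi(K_\Gamma)$ and $R^\Gamma(K'_\Gamma)$ coincide in $\Pi_S$, and then vary $\Gamma$ within the one-parameter family of planes parallel to $L(u)$ through a fixed line, using a limiting argument as in Theorems \ref{proyecaracte} and \ref{teoeli2}.

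We expect this step to be the main obstacle: a half-turn symmetry with axis in $N+u^\perp$ does not by itself give a mirror symmetry in $N+u^\perp$. The extra information must come from (\ref{playerita}) together with the freedom to let $\Gamma$ vary. What we would try first is a degeneration argument. Let $\Gamma$ tend to a plane through $N$ parallel to $u$, exactly as $\Gamma_n\to\Gamma$ in the proof of Theorem \ref{teoeli2}. In the limit, the rotation axis must become the line of symmetry of the limiting planar section, and it is forced to be perpendicular to the tangent line through $N$ parallel to $u$, because that line is fixed by the limit rotation. This would yield the mirror symmetry of sections of $K$ by planes through $N$ parallel to $u$ in the line $(N+u^\perp)\cap\Gamma$. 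Those sections sweep out $K$, which gives $S_{N+u^\perp}(K)=K$. Note that this regularity at $N$ is supplied by the stereographic hypotheses, since $\Psi$ is defined on all of $\bd K\setminus\{N\}$.

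Finally, the planes $N+u^\perp$, $u\in\mathbb{S}^2\cap\Pi_S$, are all planes of symmetry of $K$ and all pass through $N$. Their common intersection is the line through $N$ perpendicular to $\Pi_S$. This line is therefore an axis of revolution of $K$, and it must contain $S$: $S$ is the unique point of $\bd K$ at which the support plane is $\Pi_S$, a point that every one of these symmetries fixes. Hence the axis is $L(N,S)$, which proves Lemma \ref{revolution}.
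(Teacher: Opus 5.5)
There is a genuine gap, and it sits in the step you yourself flag as the main obstacle. You never show that $K_\Gamma$ is symmetric in $(N+u^{\perp})\cap\Gamma$, and neither of the two mechanisms you sketch can close it.

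\textbf{The reflection reduction is circular.} Since $S_{N+u^\perp}\circ R^\Gamma=S_{P_\Gamma}$ and the cone is $R^\Gamma$-invariant, invariance under $S_{P_\Gamma}$ is equivalent to invariance under $S_{N+u^\perp}$, which is what you want to prove.

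\textbf{The degeneration argument fails.} Suppose the planes $\Gamma_n$ are parallel to $u$ and tend to a plane $\Gamma_\infty$ through $N$. Then $S_N(K_{\Gamma_n})=S_N(K\cap\Gamma_n^+)$, where $\Gamma_n^+$ is the closed half-space bounded by $\Gamma_n$ not containing $N$. So the cones converge to the tangent cone at $N$ of $K\cap\Gamma_\infty^+$, with $\Gamma_\infty^+$ the limit half-space. For a regular $N$ this is the dihedral wedge bounded by the support plane at $N$ and by $\Gamma_\infty$, and the limit half-turn only has to preserve that wedge. Its axis is therefore the line through $N$ perpendicular to $u$ in the bisecting plane of the wedge, and this line is not contained in $\Gamma_\infty$. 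For the unit sphere with $N$ the north pole and $\Gamma_\infty$ vertical, it makes an angle $\pi/4$ with $\Gamma_\infty$. So the limit axis is not a line of symmetry of $K_{\Gamma_\infty}$, and the limit relation retains only information about $K$ near $N$. Separately, well-definedness of $\Psi$ only forces the plane through $N$ parallel to $\Pi_S$ to meet $K$ in $\{N\}$; it does not make $N$ a regular point.

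\textbf{The missing idea.} No whole section has to be symmetric; instead, choose the cutting plane separately for each chord. You came close when you proposed varying $\Gamma$ among planes through a fixed line parallel to $u$, but the right move is a selection, not a limit.
\begin{enumerate}
\item Let $\Delta=N+u^\perp$, take $x\in\bd K\setminus\Delta$, let $[x,y]$ be the chord in direction $u$, and put $z=[x,y]\cap\Delta$.
\item Rotate a chord of $\Delta\cap K$ about $z$. The difference of the angles $aNz$ and $zNb$ changes sign as the chord passes from one side of $L(N,z)$ to the other. By the intermediate value theorem there is a chord $[a,b]\ni z$ whose angle $aNb$ is bisected by $L(N,z)$.
\item Let $\Gamma$ be the plane containing $L(a,b)$ and $L(x,y)$; it is parallel to $u$.
\item By Lemma~\ref{prerevolution}, and since $R^\Gamma$ fixes the apex $N$, the axis $L_\Gamma$ is a line of $\Delta$ through $N$. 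Hence $R^\Gamma|_\Delta$ is a reflection preserving $\Delta\cap C_N(K_\Gamma)$, which is the union of the rays from $N$ through $a$ and through $b$. It must swap these rays, so $L_\Gamma=L(N,z)$.
\item Now $L(x,y)$ is perpendicular to $L_\Gamma$ at $z$, so $R^\Gamma$ acts on it as the point reflection in $z$. Moreover $R^\Gamma(x)\in C_N(K_\Gamma)\cap L(x,y)\subset \bd K_\Gamma\cap L(x,y)=\{x,y\}$ and $R^\Gamma(x)\neq x$. Hence $R^\Gamma(x)=y$, and $z$ is the midpoint of $[x,y]$.
\end{enumerate}
This is the paper's argument. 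It uses only (\ref{zapatito}) and Lemma~\ref{prerevolution}, needs no limit, and letting $\Gamma$ depend on $x$ is exactly what overcomes the obstacle you identified.

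\textbf{Two remarks on the rest of your plan.} Your observation that $N+u^\perp=\Delta$ requires $\Delta\perp\Pi_S$ is a fair point that the paper passes over. Running the argument on the planes $N+u^\perp$, $u\in\Pi_S$, is the right repair. Your closing step then needs a real argument, though. The hypothesis is only that the support plane at $S$ is unique, not that $K\cap\Pi_S=\{S\}$, so it does not yet follow that the axis passes through $S$.
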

        \begin{proof} 
        In order to prove that $K$ is a body of revolution with axis the line $L(N,S)$ we are going to prove that every plane 
        containing $L(N,S)$ is a 
       plane of symmetry of $K$. Let $\Delta$ be a plane containing $L(N,S)$ and let $x\in \bd K\backslash \Delta$. Let 
       $y\in \bd K$ such that the line $L(x,y)$ is perpendicular to $\Delta$, let $z:=L(x,y)\cap \Delta$ and let 
       $a,b\in \bd (\Delta \cap K)$ such that the point $z$ belongs to the bisectriz $B$ of the angle defined by the lines 
       $L(a,N), L(b, N)\subset \Delta$. We denote by $\Gamma$ the plane defined by the points $a,b,x,y$ and let $u$ be a 
       unit vector perpendicular to $\Delta$. By virtue of the hypothesis, there exists a rotation $R^{\Gamma}$ such the 
       relation (\ref{zapatito}) holds. By Lemma \ref{prerevolution}, 
       $L_{\Gamma}\subset (N+u^{\perp})$ but $\Delta=N+u^{\perp}$. Hence  $R^{\Gamma}(\Delta)=\Delta$ and  
       \[
       R^{\Gamma}(L(a,N))=L(b,N)\textrm{ and }R^{\Gamma}(L(b,N))=L(a,N).
       \] 
       It follows that $B=L_{\Gamma}$. Thus 
       \[
       R^{\Gamma}(L(x,N))=L(y,N), 
       \] 
        i.e., The length of  $[x,z]$ is equal to the length of $[z,y]$. Therefore $\Delta$ is a plane of symmetry of $K$.
      \end{proof}
 
       \begin{lemma}\label{seccionestereo}
       Let $\Delta$ be a plane containing the line $L(N,S)$. Then $\Delta \cap K$ is a circle.
       \end{lemma}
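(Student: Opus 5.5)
The plan is to show that the planar section $\Delta\cap K$ has the stereographic property with respect to $N$, $S$ and the line $L:=\Delta\cap\Pi_S$. Theorem \ref{estereo} then says it is a circle with diameter $[N,S]$. First I note the setting. Since $S$ is a regular point, $L$ is the unique support line of $\Delta\cap K$ at $S$. Moreover $N,S\in\bd(\Delta\cap K)$, so Theorem \ref{estereo} applies once the property is established.

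Fix a chord $[a,b]$ of $\Delta\cap K$ and let $B\subset\Delta$ be the bisectrix of the angle between $L(N,a)$ and $L(N,b)$. Let $u$ be a unit vector perpendicular to $\Delta$. I choose a plane $\Gamma$ containing $a$ and $b$ and parallel to $L(u)$, so that $\Gamma\perp\Delta$. If $\Gamma$ is parallel to $\Pi_S$, then $[a,b]$ is already parallel to $L$, and the degenerate configurations with $N\in\Gamma$ or $\Gamma\cap\inte K=\emptyset$ are handled by continuity. Otherwise the hypotheses of Theorem \ref{proyebody} apply to $\Gamma$.

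By Lemma \ref{revolution}, $\Delta$ is a plane of symmetry of $K$, hence of $K_\Gamma$ and of the cone $C_N(K_\Gamma)$. By Lemma \ref{prerevolution}, the axis $L_\Gamma$ lies in $N+u^\perp=\Delta$. So $R^\Gamma$ restricted to $\Delta$ is the reflection in a line through $N$. It preserves $\Delta\cap C_N(K_\Gamma)=L(N,a)\cup L(N,b)$, and the interior angle is mapped to itself, so this line must be $B$; that is, $R^\Gamma|_\Delta=r$. Now I use (\ref{playerita}). The segment $\Delta\cap\Psi(K_\Gamma)=\Psi([a,b])$ lies in $\Pi_S$, so it is parallel to $L$. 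It equals $R^\Gamma(\Delta\cap K'_\Gamma)$. Here $\Delta\cap K'_\Gamma$ is the homothetic image of $[a,b]$ under a homothety centred at $N$, hence parallel to $[a,b]$. Since $r$ commutes with homotheties centred at $N$, the chord $[r(a),r(b)]$ is parallel to $\Psi([a,b])$, and therefore to $L$. This is exactly the stereographic property.

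The main obstacle is to pin down that $R^\Gamma|_\Delta$ is precisely the reflection in $B$. The alternative is the reflection in the line through $N$ perpendicular to $B$, which also swaps the two lines $L(N,a)$ and $L(N,b)$. I would exclude it as follows. A rotation by $\pi$ about an axis through $N$ that leaves the cone invariant must map the convex cone $S_N(K_\Gamma)$ onto itself. Otherwise it would map the cone onto the opposite nappe, and then $K'_\Gamma$ and $\Psi(K_\Gamma)$ could not both be sections of the same cone as the remark after Theorem \ref{proyebody} asserts. A line mapping the solid cone into itself must therefore be the interior bisectrix $B$. A secondary technical point is justifying the limiting cases where $\Gamma$ fails the hypotheses, which I would treat by approximating $[a,b]$ by admissible chords and using continuity of $r$.
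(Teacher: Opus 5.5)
Your main argument is the paper's own. You take $\Gamma\supset L(a,b)$ parallel to $u$ and use Lemma \ref{prerevolution} to put $L_\Gamma$ in $\Delta$. You then identify $R^\Gamma|_\Delta$ with the reflection $r$ in $B$, intersect (\ref{playerita}) with $\Delta$, and use that homotheties centred at $N$ preserve directions.

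\textbf{The gap: the case $\Gamma\parallel\Pi_S$.} Your dismissal of this case fails as written. There $[a,b]$ is parallel to $L$, but the stereographic property asks that $[r(a),r(b)]$ be parallel to $L$. Since $r$ swaps the rays from $N$ through $a$ and through $b$, $[r(a),r(b)]$ is parallel to $[a,b]$ only when $|Na|=|Nb|$. These chords are not a harmless edge case: they are exactly the ones the Claim in the proof of Theorem \ref{estereo} uses to get the symmetry of the section in $L(N,S)$. Either of the tools you already invoke repairs it:
\begin{enumerate}
\item \emph{Continuity.} Approximate $[a,b]$ by chords $[a,b_n]$ with $b_n\to b$ along $\bd(\Delta\cap K)$, $b_n\neq b$. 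These are not parallel to $L$, so your argument applies to them, and you pass to the limit; the bisector depends continuously on the chord since $a,b\neq N$.
\item \emph{Lemma \ref{revolution}.} The symmetries in planes through $L(N,S)$ fix $S$, so they preserve the unique support plane $\Pi_S$; hence $L(N,S)\perp\Pi_S$. A chord of $\Delta\cap K$ parallel to $L$ is then perpendicular to the symmetry line $L(N,S)$, so its endpoints are mirror images in that line, giving $|Na|=|Nb|$.
\end{enumerate}
The paper's proof says nothing about this case, because Theorem \ref{proyebody} excludes planes parallel to $\Pi_S$.

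\textbf{The ``main obstacle'' is not one.} In $\Delta$, the reflection in the line through $N$ perpendicular to $B$ equals $r$ composed with the half-turn about $N$. The half-turn maps every segment to a parallel one. So whichever bisector $L_\Gamma$ is, $R^\Gamma(\Delta\cap K'_\Gamma)$ is parallel to $[r(a),r(b)]$, and the conclusion is unchanged.

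The external bisector is also excluded directly by the definition. $C_N(K_\Gamma)$ is the boundary of the one-nappe cone $S_N(K_\Gamma)$, so $\Delta\cap C_N(K_\Gamma)$ is the pair of rays from $N$ through $a$ and $b$, not $L(N,a)\cup L(N,b)$. Only the reflection in $B$ swaps these two rays, so you need no appeal to the remark after Theorem \ref{proyebody}.
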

       \begin{proof}
       Let $\Delta$ be a plane containing the line $L(N,S)$. In order  to prove that $\Delta \cap K$ is a circle we will use 
       Lemma \ref{estereo}, thus we are going to prove that  $\Delta \cap K$ satisfies the stereographic property. Let 
       $u\in (\mathbb{S}^2\cap \Pi_S)$ perpendicular to $\Delta$, let $a,b\in \Delta \cap K$ and let $\Gamma$ 
       be a plane containing $L(a,b)$ and parallel to $u$. We denote by $B$  the bisectriz of the angle defined by the lines $L(a,N),L(b,N)\subset \Delta$
       and by $r(a),r(b)$ the reflections of $a,b$ with respect to $B$. We are going to prove that the line segment 
        $r(a),r(b)$ is parallel to  $ \Pi_S\cap \Delta$.
       
       By virtue of the hypothesis, there exists a rotation 
       $R^{\Gamma}$ such the relations (\ref{zapatito}) and (\ref{playerita}) holds, where $K'_{\Gamma}$ is a homothetic copy of $K_{\Gamma}$ by an homothety with center 
        at $N$. By Lemma \ref{prerevolution}, 
       $L_{\Gamma}\subset (N+u^{\perp})$ but $\Delta=N+u^{\perp}$. Hence  $R^{\Gamma}(\Delta)=\Delta$ and  
       \[
       R^{\Gamma}(L(a,N))=L(b,N)\textrm{ and }R^{\Gamma}(L(b,N))=L(a,N).
       \] 
       It follows that $B=L_{\Gamma}$ and $R^{\Gamma}|_{\Delta}$ is the reflection with respect to $B$. 
       
       Let $K'_{\Gamma} \cap \Delta:=[a',b']$. By (\ref{playerita}) 
       \[
       \Psi (K_{\Gamma})\cap \Delta =R^{\Gamma}(K'_{\Gamma}) \cap \Delta.
       \]
        On the one hand,
         \[
         \Psi ([a,b])= \Psi (K_{\Gamma} \cap \Delta)=\Psi (K_{\Gamma})\cap \Delta
         \]
         and, on the other hand,
         \[
        R^{\Gamma}(K'_{\Gamma}) \cap \Delta=R^{\Gamma}(K'_{\Gamma} \cap \Delta)=R^{\Gamma}([a',b'])
         \]         
       Consequently      
        \begin{eqnarray}\label{melapelan}
        \Psi ([a,b])= R^{\Gamma}([a',b'])
        \end{eqnarray}
        Since $K'_{\Gamma}$ is a homothetic copy of $K_{\Gamma}$ by an homothety with center 
        at $N$ the line segment $[a',b']$ is parallel to $[a,b]$. Hence $R^{\Gamma}([a',b'])=R^{\Gamma}|_{\Delta}([a',b'])$ is parallel to 
        $r(a),r(b)$ (the reflection with respect to $B$,  in the plane $\Delta$,  preserves parallelism). By (\ref{melapelan}), the line segment 
        $r(a),r(b)$ is parallel to  $\Psi ([a,b])\subset \Pi_S\cap \Delta$.
         \end{proof}
         \textbf{Proof of Theorem \ref{proyebody}.}
         The proof of the Theorem \ref{proyebody} follows immediately from Lemmas \ref{revolution} and \ref{seccionestereo}. 
       
           \textbf{Author Contributions.} Material preparation were performed by Efr\'en Morales 
           Amaya. The first draft of the manuscript was written by Efr\'n Morales Amaya.  
           
          \textbf{Funding.} The authors declare that no funds, grants, or other support were received 
          during the preparation of this manuscript.

          \textbf{Data Availability.} Data sharing not applicable to this article as no datasets were 
          generated or analyzed during  the current study.
          
          \textbf{Declarations.}
          
           \textbf{Conflict of interest.} The authors have no relevant financial or non-financial interests to disclose.

\end{document}